\title{The normaliser decomposition for $p$--local finite groups}
\author{Assaf Libman}
\address{Department of Mathematical Sciences\\
King's College\\
University of Aberdeen\\\newline
Aberdeen\\
AB24 3UE\\
Scotland\\
United Kingdom}
\email{assaf@maths.abdn.ac.uk}
\urladdr{}
\let\xysavmatrix\xymatrix
\def\xymatrix{\disablesubscriptcorrection\xysavmatrix}
\def\underk{\mskip0mu\underline{\mskip-0mu{k}\mskip-3mu}\mskip3mu}
\def\undern{\mskip0mu\underline{\mskip-0mu{n}\mskip-2mu}\mskip2mu}
\def\cnewtheorem#1[#2]#3{\newtheorem{#1}{#3}[section]
\expandafter\let\csname c@#1\endcsname\c@thmain}
\def\dnewtheorem#1[#2]#3{\newtheorem{#1}{#3}[section]
\expandafter\let\csname c@#1\endcsname\c@prop}
\newtheorem{thma}{Theorem A}
\newtheorem{thmb}{Theorem B}
\newtheorem{prop}{Proposition}[section]
\newtheorem{cofthm}{Cofinality Theorem}
\newtheorem{pdthm}{Segal's Pushdown Theorem}
\theoremstyle{definition}
\newtheorem{remarkx}{Remark}
\newtheorem{notationx}{Notation}
\def\vp{\varphi}
\def\de{\delta}
\def\A{\ensuremath{\mathcal{A}}}
\def\B{\ensuremath{\mathcal{B}}}
\def\D{\ensuremath{\mathcal{D}}}
\def\E{\ensuremath{\mathcal{E}}}
\def\F{\ensuremath{\mathcal{F}}}
\def\H{\ensuremath{\mathcal{H}}}
\def\I{\ensuremath{\mathcal{I}}}
\def\L{\ensuremath{\mathcal{L}}}
\def\T{\ensuremath{\mathcal{T}}}
\def\NN{\mathbb{N}}
\def\ZZ{\mathbb{Z}}
\def\spaces{\textbf{Spaces}}
\def\incl{\text{incl}}
\def\id{\textrm{id}}
\def\Id{\textrm{Id}}
\def\op{\ensuremath{\mathrm{op}}}
\def\bsd{{\mathrm{\bar{s}d}}}
\def\Cat{\mathbf{Cat}}
\def\cat{\Cat}
\def\sets{\mathbf{Sets}}
\def\Nr{\text{Nr}}
\def\Tr{\text{Tr}}
\def\No{\breve{N}}
\def\AAA{{\mathbf{A}}}
\def\BBB{{\mathbf{B}}}
\def\EEE{{\mathbf{E}}}
\def\HHH{{\mathbf{H}}}
\def\KKK{{\mathbf{K}}}
\def\LLL{{\mathbf{L}}}
\def\PPP{{\mathbf{P}}}
\def\bfK{{\mathbf{K}}}
\def\darrow{\downarrow}
\newcommand{\xto}[1]{\xrightarrow{#1}}
\newcommand{\func}[3]{\ensuremath{#1\co #2\rightarrow#3}}
\newcommand{\pcomp}[1]{{#1}^\wedge_p}
\newcommand{\hhocolim}[1]{\hocolim_{#1}}
\numberwithin{equation}{section}
\begin{document}

\begin{asciiabstract}
We construct an analogue of the normaliser decomposition for p-local
finite groups (S,F,L) with respect to collections of F-centric subgroups
and collections of elementary abelian subgroups of S.  This enables
us to describe the classifying space of a p-local finite group, before
p-completion, as the homotopy colimit of a diagram of classifying spaces
of finite groups whose shape is a poset and all maps are induced by
group monomorphisms.
\end{asciiabstract}

\begin{htmlabstract}
We construct an analogue of the normaliser decomposition for p&ndash;local
finite groups (S,F,L) with respect to collections
of F&ndash;centric subgroups and collections of elementary abelian
subgroups of S.  This enables us to describe the classifying space
of a p&ndash;local finite group, before p&ndash;completion, as the homotopy
colimit of a diagram of classifying spaces of finite groups whose shape
is a poset and all maps are induced by group monomorphisms.
\end{htmlabstract}

\begin{abstract}
We construct an analogue of the normaliser decomposition for $p$--local
finite groups $(S,\mathcal{F},\mathcal{L})$ with respect to collections
of $\mathcal{F}$--centric subgroups and collections of elementary abelian
subgroups of $S$.  This enables us to describe the classifying space
of a $p$--local finite group, before $p$--completion, as the homotopy
colimit of a diagram of classifying spaces of finite groups whose shape
is a poset and all maps are induced by group monomorphisms.
\end{abstract}

\maketitle

%
%
\section{The main results}

For finite groups Dwyer \cite{Dwyer-decomp} defined three types of
homology decompositions of classifying spaces of finite groups known as the 
``subgroup'', ``centraliser'' and ``normaliser'' decompositions.
These decompositions are functors $F\co D \to \spaces$, where
$D$ is a small category which is constructed using collections $\H$ of carefully chosen
subgroups of $G$.
The essential property of these functors is, that given a finite group
$G$, the spaces $F(d)$ have the homotopy type of classifying spaces of
subgroups of $G$.
Moreover the category $D$ is constructed using information about the conjugation
in $G$ of the subgroups in $\H$.
We say that $D$ depends on the fusion of the collection $\H$ of $G$.

The purpose of this note is to construct an analogue of the normaliser
decomposition for $p$--local finite groups in certain important cases.
Throughout this note we will freely use the terminology and notation that 
by now has become standard in the theory for $p$--local finite groups.
The reader who is not familiar with the jargon is advised to read \fullref{sec
  plfg} prior to this section, and is also referred to \cite{BLO2}
where $p$--local finite groups were initially defined. 

It should be noted that
the analogues of the ``subgroup'' and the
``centraliser'' decompositions for $p$--local finite groups was already
known to Broto, Levi and Oliver \cite[Section~2]{BLO2}.

The normaliser decomposition which is introduced in this note 
enabled the author together with Antonio Viruel to analyze the nerve $|\L|$
of $p$--local finite groups $(S,\F,\L)$ with small Sylow
subgroups $S$.
We prove that these are classifying spaces of, generally infinite,
discrete groups \cite{LV}. 
The author also used normaliser decompositions to give an analysis of the spectra associated
with the nerve, $|\L|$, of the linking systems due to 
Ruiz and Viruel in \cite{RV-extraspecial} and other ``exotic'' examples, see \cite{Li}.
These results will appear separately as they involve techniques that
have little to do with the actual construction of the normaliser
decomposition.  

We now describe the main results of this paper.
Throughout we work simplicially, thus a space means a
simplicial set.
The category of simplicial sets is denoted by $\spaces$.
The nerve of a small category $\mathbf{D}$ is denoted
$\Nr(\mathbf{D})$ or $|\mathbf{D}|$.
We obtain a functor $|-|\co\cat\to\spaces$ where $\cat$ is the category
of small categories. 
A more detailed discussion can be found in \fullref{sec homotopy colimits}

\begin{defn}
Let $(S,\F,\L)$ be a $p$--local finite group.
A collection is a set ${\mathcal{C}}$ of subgroups of $S$ which is
closed under conjugacy in $\F$.
That is if $P\leq S$ belongs to ${\mathcal{C}}$ then so do all the $\F$--conjugates of $P$.
A collection ${\mathcal{C}}$ is called $\F$--centric if it consists of
$\F$--centric subgroups of $S$.
\end{defn}

\begin{defn}
\label{def k-simplices}
A $k$--simplex in a collection ${\mathcal{C}}$ is a sequence $\PPP$ of proper inclusions 
$P_0<P_1<\cdots <P_k$ of elements of ${\mathcal{C}}$.
Two $k$--simplices $\PPP$ and $\PPP'$ are called conjugate if there exists an isomorphism
$f\in \Iso_\F(P_k,P_k')$ such that $f(P_i)=P_i'$ for all $i=0,\ldots,k$.
The conjugacy class of $\PPP$ is denoted $[\PPP]$. 
\end{defn}

\begin{defn}
\label{def bsdc}
The category $\bsd{\mathcal{C}}$ is a poset whose objects are the conjugacy classes $[\PPP]$ of all
the $k$--simplices in ${\mathcal{C}}$ where $k=0,1,2,\ldots$.
A morphism $[\PPP] \to [\PPP']$ in $\bsd{\mathcal{C}}$ exists if $\PPP'$ is conjugate to a
subsimplex of $\PPP$.
\end{defn}
Recall from \fullref{iota morphisms} that in every $p$--local finite group
it is possible to choose morphisms $\iota_P^Q$ in the linking system $\L$ which are
lifts of inclusions $P\leq Q$ of $\F$--centric subgroups.
The choice can be made in such a way that
$\iota_Q^R\circ\iota_P^Q=\iota_P^R$ for inclusions $P\leq Q\leq R$.

\begin{defn}
Let ${\mathcal{C}}$ be an $\F$--centric collection in $(S,\F,\L)$ and let
$\PPP$ be a $k$--simplex in ${\mathcal{C}}$.
Define $\Aut_\L(\PPP)$ as the subgroup of $\prod_{i=0}^k\Aut_\L(P_i)$ whose elements
are the $(k{+}1)$--tuples $(\vp_i)_{i=0}^k$ which render the following ladder commutative
in $\L$
$$
\begin{CD}
P_0 
@>{\iota_{P_0}^{P_1}}>>
P_1
@>{\iota_{P_1}^{P_2}}>>
\cdots
@>{\iota_{P_{k-1}}^{P_k}}>>
P_k 
\\
@V{\vp_0}VV
@V{\vp_1}VV
@.
@VV{\vp_k}V
\\
P_0 
@>>{\iota_{P_0}^{P_1}}>
P_1
@>>{\iota_{P_1}^{P_2}}>
\cdots
@>>{\iota_{P_{k-1}}^{P_k}}>
P_k 
\end{CD}
$$
\end{defn}

\begin{prop}
\label{prop resL}
The assignment $(\vp_i)_{i=0}^k \mapsto \vp_0$ gives rise to a canonical isomorphism of
$\Aut_\L(\PPP)$ with a subgroup of $\Aut_\L(P_0)$.
More generally, if $\PPP'$ is a subsimplex of $\PPP$ in ${\mathcal{C}}$ then
restriction 
induces a monomorphism of groups
$\Aut_\L(\PPP) \to \Aut_\L(\PPP')$.
\end{prop}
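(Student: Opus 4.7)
The plan is to reduce both assertions to a single input from the theory of linking systems: every morphism of $\L$ between $\F$--centric subgroups is both a categorical monomorphism and a categorical epimorphism, see \cite{BLO2}. Applied to the distinguished morphisms $\iota_{P_{l-1}}^{P_l}$, this supplies the two-sided cancellation that I will use throughout the ladder.

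First I would dispense with the formal bookkeeping. Coordinatewise composition and inversion of tuples both preserve the commutativity of the ladder, so $\Aut_\L(\PPP)$ is genuinely a subgroup of $\prod_{i=0}^k \Aut_\L(P_i)$. The assignment $(\vp_i)\mapsto \vp_0$, and more generally the restriction $(\vp_i)_{i=0}^k \mapsto (\vp_{i_s})_{s=0}^j$ to a subsimplex $\PPP' = (P_{i_0} < \cdots < P_{i_j})$ of $\PPP$, are group homomorphisms by construction. Hence the whole content of the proposition lies in injectivity, and it suffices to establish the general restriction statement; the first assertion is then the special case $\PPP' = (P_0)$.

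The crucial step is a two-directional propagation along the ladder. Suppose $(\vp_i)$ and $(\vp_i')$ in $\Aut_\L(\PPP)$ agree on $\PPP'$; taking $m=i_0$, this forces $\vp_m = \vp_m'$. The ladder commutativity reads
$$
\vp_l \circ \iota_{P_{l-1}}^{P_l} \;=\; \iota_{P_{l-1}}^{P_l} \circ \vp_{l-1}
$$
for each $l \in \{1, \ldots, k\}$. Propagating upward by induction on $l > m$: once $\vp_{l-1} = \vp_{l-1}'$, the right-hand sides agree and right cancellation of $\iota_{P_{l-1}}^{P_l}$ (the epimorphism property) forces $\vp_l = \vp_l'$. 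Propagating downward by induction on $l < m$: once $\vp_l = \vp_l'$, the left-hand sides agree and left cancellation of $\iota_{P_{l-1}}^{P_l}$ (the monomorphism property) forces $\vp_{l-1} = \vp_{l-1}'$. Iterating in both directions makes all coordinates coincide, proving injectivity.

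The only nontrivial ingredient is the mono/epi property of morphisms in $\L$; I expect no genuine obstacle beyond correctly invoking this cancellation statement from \cite{BLO2}, after which the argument is mechanical diagram chasing along the ladder.
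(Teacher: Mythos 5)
Your proposal is correct and takes essentially the same approach as the paper: the paper's proof simply cites its Proposition 2.8 (``restn in L''), which is itself an existence-and-uniqueness statement for lifts proved from the mono/epi property of morphisms in $\L$, and the propagation you describe is precisely the argument implicit in that citation. You have just unpacked the intermediate lemma into its underlying cancellation steps, making the two-directional propagation along the ladder explicit.
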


\begin{proof}
The second assertion follows immediately from \fullref{restn in L}.
The first follows from the second by letting $\PPP'$ be the $1$--simplex $P_0$.
\end{proof}

\begin{notationx}
$\B\Aut_\L(\PPP)$ denotes the subcategory of $\L$ whose only object is $P_0$
and whose morphism set is $\Aut_\L(\PPP)$.
\end{notationx}

\begin{defn}
Given an $\F$--centric collection ${\mathcal{C}}$ in a $p$--local finite group $(S,\F,\L)$, let
$\L^{\mathcal{C}}$ denote the full subcategory of $\L$ generated by the objects set ${\mathcal{C}}$.
\end{defn}

Frequently, the inclusion $\L^{\mathcal{C}} \subseteq \L$ induces a weak homotopy equivalence on nerves.
For example, this happens when ${\mathcal{C}}$ contains all the
$\F$--centric $\F$--radical subgroups of $S$.
This fact is proved by Broto, Castellana, Grodal, Levi and Oliver
\cite[Theorem 3.5]{BCGLO1}.

The following theorem applies to all $\F$--centric collections.
The decomposition approximates $\L$ if the inclusion $\L^{\mathcal{C}}
\subseteq \L$ induces an equivalence as explained above.

\begin{thma}
\label{thmA}
Fix an $\F$--centric collection ${\mathcal{C}}$ in a $p$--local finite group $(S,\F,\L)$.
Then there exists a functor $\de_{\mathcal{C}}\co\bsd{\mathcal{C}}\to\spaces$ such that
\begin{enumerate}
\item
\label{thmA:target}
There is a natural weak homotopy equivalence 
$$\hhocolim{\bsd{\mathcal{C}}}\, \de_{\mathcal{C}} \xto{~~\simeq~~}
  |\L^{\mathcal{C}}|.$$
\item There is a natural weak  homotopy equivalence
$B\Aut_\L(\PPP) \xto{\simeq} \de_{\mathcal{C}}([\PPP])$ 
for every $k$--simplex $\PPP$.
\label{thmA:terms}

\item
The natural maps $\de_{\mathcal{C}}([\PPP]) \to |\L^{\mathcal{C}}|$ are induced by the inclusion of 
categories $\B\Aut_\L(\PPP) \subseteq \L^{\mathcal{C}}$.
\label{thmA:augmentation}

\item
If $\PPP'$ is a subsimplex of $\PPP$ then the equivalence \eqref{thmA:terms} renders 
the following square commutative
$$
\xymatrix{
B\Aut_\L(\PPP) \ar[r]^\simeq \ar[d]_{B\res^\PPP_{\PPP'}} &
\de_{\mathcal{C}}([\PPP]) \ar[d] 
\\
B\Aut_\L(\PPP') \ar[r]^\simeq  &
\de_{\mathcal{C}}([\PPP']) 
}
$$
Moreover if $\PPP$ and $\PPP'$ are conjugate $k$--simplices and $\psi\in\Iso_\L(P_0,P_0')$
maps $\Aut_\L(\PPP')$ onto $\Aut_\L(\PPP)$ by conjugation then the following square
commutes
$$
\xymatrix{
B\Aut_\L(\PPP') \ar[r]^{\simeq} \ar[d]_{Bc_\psi} &
\de_{\mathcal{C}}([\PPP']) \ar@{=}[d] 
\\
B\Aut_\L(\PPP) \ar[r]_{\simeq} &
\de_{\mathcal{C}}([\PPP]) 
}
$$
\label{thmA:maps}
\end{enumerate}
\end{thma}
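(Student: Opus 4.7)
My approach is to adapt Dwyer's construction of the normaliser decomposition for finite groups to the $p$--local setting, using Segal's Pushdown Theorem as the main technical tool. The plan is to interpose an auxiliary category $\tsd\L^{\mathcal{C}}$ between $\L^{\mathcal{C}}$ and $\bsd{\mathcal{C}}$ so that the induced map of nerves $|\tsd\L^{\mathcal{C}}|\to|\L^{\mathcal{C}}|$ is a weak equivalence and so that the canonical projection $\pi\co\tsd\L^{\mathcal{C}}\to\bsd{\mathcal{C}}$ which collapses a simplex to its conjugacy class admits a pushdown analysis.

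Concretely, the objects of $\tsd\L^{\mathcal{C}}$ should be the $k$--simplices $\PPP$ in $\mathcal{C}$ for all $k\geq 0$, and a morphism $\PPP\to\PPP'$ should consist of a subsimplex inclusion $\PPP'\subseteq\PPP$ together with an element of $\Aut_\L(\PPP)$; composition is well-defined thanks to the restriction monomorphisms of Proposition \ref{prop resL}. To show that $|\tsd\L^{\mathcal{C}}|\simeq|\L^{\mathcal{C}}|$ one compares $\tsd\L^{\mathcal{C}}$ with the ordinary barycentric subdivision $\sd\L^{\mathcal{C}}$, whose nerve is known to be equivalent to $|\L^{\mathcal{C}}|$ by a Thomason-type argument, and checks that the forgetful functor between them has contractible fibres. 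For the pushdown step, the fibre of $\pi$ over $[\PPP]$ is a groupoid whose objects are the simplices in the conjugacy class of $\PPP$ and whose automorphism group at any object is, by Proposition \ref{prop resL}, canonically $\Aut_\L(\PPP)$; hence this fibre has nerve equivalent to $B\Aut_\L(\PPP)$.

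Segal's Pushdown Theorem applied to $\pi$ then produces a functor $\de_{\mathcal{C}}\co\bsd{\mathcal{C}}\to\spaces$ with $\de_{\mathcal{C}}([\PPP])\simeq B\Aut_\L(\PPP)$ and $\hhocolim{\bsd{\mathcal{C}}}\,\de_{\mathcal{C}}\simeq|\tsd\L^{\mathcal{C}}|\simeq|\L^{\mathcal{C}}|$, yielding items (1) and (2). Item (3) is tautological, as the equivalence $B\Aut_\L(\PPP)\to\de_{\mathcal{C}}([\PPP])$ is induced by the inclusion $\B\Aut_\L(\PPP)\subseteq\tsd\L^{\mathcal{C}}\to\L^{\mathcal{C}}$. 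Item (4) follows from the functoriality of $\de_{\mathcal{C}}$ once one observes that both $\res^\PPP_{\PPP'}$ and the conjugation $Bc_\psi$ are realised directly as morphisms in $\tsd\L^{\mathcal{C}}$ that project to the relevant maps in $\bsd{\mathcal{C}}$.

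The main obstacle will be the fibrewise analysis of $\pi$ needed to invoke Segal's Pushdown Theorem. One must show that morphisms of $\tsd\L^{\mathcal{C}}$ within a single fibre act transitively on simplices in a conjugacy class with isotropy exactly $\Aut_\L(\PPP)$. This relies on a careful choice of the definition of $\tsd\L^{\mathcal{C}}$ compatible with the non-uniqueness of the lifts $\iota_P^Q$ in $\L$, so that the restriction monomorphisms supplied by Proposition \ref{prop resL} interact functorially with the conjugation action of $\Iso_\F$ on simplices.
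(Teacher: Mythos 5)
Your high-level plan (interpose a subdivision category between $\L^{\mathcal{C}}$ and $\bsd{\mathcal{C}}$, show its nerve is equivalent to $|\L^{\mathcal{C}}|$, then push down along the projection to $\bsd{\mathcal{C}}$) is exactly the paper's strategy, but as written the proposal has two genuine gaps.

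First, the category $\tsd\L^{\mathcal{C}}$ you describe does not work. You take a morphism $\PPP\to\PPP'$ to be a subsimplex inclusion $\PPP'\subseteq\PPP$ together with an element of $\Aut_\L(\PPP)$. Composition then fails: given $(\alpha_1\in\Aut_\L(\PPP),\epsilon_1)\colon\PPP\to\PPP'$ and $(\alpha_2\in\Aut_\L(\PPP'),\epsilon_2)\colon\PPP'\to\PPP''$, the composite must carry an element of $\Aut_\L(\PPP)$, but restriction $\Aut_\L(\PPP)\to\Aut_\L(\PPP')$ is only a monomorphism, so $\alpha_2$ need not lift. Worse, with this definition the fibre of $\pi$ over $[\PPP]$ is a \emph{disjoint union} of groups $\B\Aut_\L(\PPP')$ over the simplices $\PPP'$ conjugate to $\PPP$, because there are no morphisms between distinct simplices of the same length; it is not the connected groupoid you assert. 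The correct definition is the one in \fullref{def sA}: a morphism $\PPP\to\PPP'$ is a pair $(\epsilon,\vp)$ with $\vp$ an \emph{isomorphism from the subsimplex $\epsilon^*\PPP$ of the source onto the target} $\PPP'$. Then composition is $(\epsilon\epsilon',{\epsilon'}^*(\vp)\circ\vp')$, and the full subcategory on a conjugacy class is a connected groupoid with automorphism group $\Aut_\L(\PPP)$.

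Second, even with the corrected category, Segal's Pushdown Theorem does not directly hand you the fibre. It computes the homotopy left Kan extension $\pi_*(\star)([\PPP])=\hocolim\big((\pi\darrow[\PPP])\to s(\L^{\mathcal{C}})\xto{\star}\spaces\big)$, which is the nerve of the \emph{comma category} $(\pi\darrow[\PPP])$, not of the fibre $\Pi_\PPP$. You still need to show that the inclusion $\Pi_\PPP\hookrightarrow(\pi\darrow[\PPP])$ is right cofinal; the paper does this by exhibiting a left adjoint (\fullref{adj cof lem}, used in \fullref{commapi}). This is the cofinality step you flag as the "main obstacle", but the proposal does not indicate how to carry it out, and with your original definition of $\tsd\L^{\mathcal{C}}$ it is false. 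Finally, the equivalence $|s(\L^{\mathcal{C}})|\simeq|\L^{\mathcal{C}}|$ is obtained in the paper not by a detour through barycentric subdivision and "contractible fibres" but from S{\l}omi\'nska's result (\fullref{p cofinal}) that the first-vertex functor $p\co s(\A)\to\A$ is right cofinal; your route would require a separate argument to make precise.
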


\begin{remarkx}
When $(S,\F,\L)$ is associated with a finite group $G$ one may consider
the $G$--collection
$\H$ consisting of all the subgroups of $G$ which are conjugate to elements of the 
$\F$--collection ${\mathcal{C}}$.
Dwyer \cite[Section~3]{Dwyer-decomp} constructs a poset $\bsd\H$ and a functor 
$\de^{\text{Dwyer}}_\H\co\bsd\H \to \spaces$
which he calls the normaliser decomposition.
We will show in \fullref{compare with Dwyer} that $\bsd\H=\bsd{\mathcal{C}}$ and that $\de_{\mathcal{C}}$ and 
$\de^{\text{Dwyer}}_\H$ can be connected by a natural zigzag of mod--$p$ equivalences.
That is, a zigzag of natural transformations which
at every object of $\bsd{\mathcal{C}}$ give rise to an $H_*(-;\ZZ/p)$--isomorphism.
\end{remarkx}
We now describe the second type of normaliser decomposition that we shall 
construct in this note.
It is based on collections $\E$ of elementary abelian subgroups of $S$.

\begin{defn}
\label{def autF}
For a $k$--simplex $\EEE$ in $\E$ define $\Aut_\F(\EEE)$ as the subgroup of 
$\Aut_\F(E_k)$ consisting of the automorphisms $f$ such that $f(E_i)=E_i$ 
for all $i=0,\ldots,k$.
\end{defn}

Consider an $\F$--centric collection ${\mathcal{C}}$ in $(S,\F,\L)$.

\begin{defn}
\label{def barcl}
Fix an elementary abelian subgroup $E$ of $S$.
The objects of the category $\bar{C}_{\L}({\mathcal{C}};E)$ are pairs $(P,f)$ where $P\in {\mathcal{C}}$ and
$f\co E \to Z(P)$ is a morphism in $\F$.
Morphisms $(P,f) \to (Q,g)$ in $\bar{C}_{\L}({\mathcal{C}};E)$ are morphisms $\psi \in \L(P,Q)$ such that 
$g=\pi(\psi)\circ f$ where $\pi\co\L \to \F$ is the projection functor.
\end{defn}

Observe that $\Aut_\F(E)$ acts on $\bar{C}_{\L^{\mathcal{C}}}(E)$ by pre-composition.
That is, every $h\in\Aut_\F(E)$ indices the assignment $(P,f)\mapsto (P,f\circ h)$.

\begin{defn}
\label{def NoL}
For a $k$--simplex $\EEE$ in $\E$ let $\No_\L({\mathcal{C}};\EEE)$ denote the subcategory of $\L$
whose objects are $P\in{\mathcal{C}}$ for which $E_k \leq Z(P)$.
A morphism $\vp\in\L(P,Q)$ belongs to $\No_\L({\mathcal{C}};\EEE)$ if $\pi(\vp)|_{E_k}$ is an element
of $\Aut_\F(\EEE)$.
\end{defn}
Recall that the homotopy orbit space of a $G$--space $X$, ie the Borel construction
$EG\times_G X$, is denoted by $X_{hG}$.

\begin{prop}
\label{No equiv orbit}
Let $\EEE$ be a $k$--simplex in $\E$.
There is a map
$$
\epsilon_\EEE \co | \No_\L({\mathcal{C}};\EEE)| \to |\bar{C}_\L({\mathcal{C}};E_k)|_{h\Aut_\F(\EEE)}
$$ 
which is a homotopy equivalence if $E_k$ is fully $\F$--centralised.
The map is natural with respect to inclusion of simplices.
\end{prop}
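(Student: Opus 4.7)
The plan is to realize $\epsilon_\EEE$ as the nerve of a comparison functor into a Grothendieck construction. Write $H := \Aut_\F(\EEE)$ and let $H\wr\bar{C}_\L({\mathcal{C}};E_k)$ denote the Grothendieck construction (equivalently, the transport category) associated to the pre-composition action of $H$ on $\bar{C}_\L({\mathcal{C}};E_k)$. By Thomason's theorem its nerve is naturally weakly equivalent to the Borel construction $|\bar{C}_\L({\mathcal{C}};E_k)|_{hH}$, so it suffices to construct a natural functor
$$
F_\EEE \co \No_\L({\mathcal{C}};\EEE) \longrightarrow H\wr\bar{C}_\L({\mathcal{C}};E_k)
$$
and to show that, when $E_k$ is fully $\F$--centralised, $F_\EEE$ induces a weak equivalence on nerves.

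The functor $F_\EEE$ is the obvious one: since $E_k\leq Z(P)$ for every object $P$ of $\No_\L({\mathcal{C}};\EEE)$, set $F_\EEE(P) = (P,\iota_P)$, with $\iota_P\co E_k\hookrightarrow Z(P)$ the inclusion viewed as a morphism of $\F$; for $\vp\in\L(P,Q)$ in $\No_\L({\mathcal{C}};\EEE)$ the element $h:=\pi(\vp)|_{E_k}$ lies in $H$, and the pair $(\vp,h)$ is a morphism $(P,\iota_P)\to(Q,\iota_Q)$ of the transport category because $\pi(\vp)\circ\iota_P=\iota_Q\circ h$. Functoriality is immediate, and naturality in $\EEE$ is straightforward: for a sub-simplex $\EEE'\subseteq\EEE$ one has $H\leq H':=\Aut_\F(\EEE')$, and when the top vertex drops from $E_k$ to $E_{k'}$ the $H'$--equivariant restriction functor $\bar{C}_\L({\mathcal{C}};E_k)\to \bar{C}_\L({\mathcal{C}};E_{k'})$, $(P,f)\mapsto(P,f|_{E_{k'}})$, intertwines $F_\EEE$ with $F_{\EEE'}$.

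The substantive claim is that $F_\EEE$ is a weak equivalence, which we propose to prove via Quillen's Theorem A: for each object $(Q,g)$ of the target, the over-category $F_\EEE\downarrow (Q,g)$ must have contractible nerve. Full faithfulness of $F_\EEE$ is immediate from the definitions, so matters reduce to an essential-surjectivity statement, and this is where the fully-centralised hypothesis enters through the extension axiom of the saturated fusion system. Since $g(E_k)\leq Z(Q)$, one has $Q\leq C_S(g(E_k))$; because $E_k$ is fully $\F$--centralised, the isomorphism $g^{-1}\co g(E_k)\to E_k$ extends to a morphism $\tilde g\co Q\cdot C_S(g(E_k))\to S$ in $\F$. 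Setting $P:=\tilde g(Q)$ yields $E_k\leq Z(P)$, and any lift to $\L$ of $\tilde g|_Q$ supplied by the linking-system axioms provides a morphism $(P,\iota_P)\to(Q,g)$ in $H\wr\bar{C}_\L({\mathcal{C}};E_k)$, so the over-category is non-empty.

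The main obstacle is upgrading this non-emptiness to genuine \emph{contractibility} of the over-category. This requires careful bookkeeping: the ambiguity in the choice of $P$ and of the lift to $\L$ is controlled, via the distinguished monomorphisms $\delta$ of the linking system, by centres of the subgroups involved, and one must verify that this ambiguity is exactly what the $H$--component of morphisms in the transport category absorbs. Once this identification is made, the over-category collapses to a contractible one and Theorem A delivers the required weak equivalence.
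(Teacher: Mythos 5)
Your approach is essentially the one the paper takes: realize $\epsilon_\EEE$ as the nerve of a functor into the Grothendieck construction $\B\Aut_\F(\EEE)\int \bar{C}_\L({\mathcal{C}};E_k)$, use Thomason's theorem to identify its nerve with the Borel construction, check full faithfulness from the definitions, and use the extension axiom (II) together with fully $\F$--centralisedness to produce, for each $(Q,g)$, an isomorphic object in the image. So the construction of the functor and the use of axiom II match the paper exactly.

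The issue is your final paragraph. You have, at that point, already shown that $F_\EEE$ is fully faithful and essentially surjective --- your ``non-emptiness'' argument in fact produces an \emph{isomorphism} $F_\EEE(P)\cong(Q,g)$, not merely a morphism. A fully faithful, essentially surjective functor is an equivalence of categories, which induces a homotopy equivalence on nerves with no further work; the paper phrases this as ``the image is a skeletal subcategory.'' There is no residual bookkeeping about ambiguity of lifts or distinguished monomorphisms $\delta$ to do, and Quillen's Theorem A is unnecessary machinery here (if you do want to invoke it, contractibility of $F_\EEE\downarrow(Q,g)$ is automatic: the object $(P,F_\EEE(P)\xto{\cong}(Q,g))$ is terminal in that over-category because $F_\EEE$ is fully faithful, and a category with a terminal object has contractible nerve). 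So you should simply cut the last paragraph and state the equivalence-of-categories conclusion. One small sign issue to fix: $\tilde g|_Q$ goes $Q\to P$ in $\F$, so a lift gives a morphism $Q\to P$ in $\L$; you want its inverse (or the morphism $(Q,g)\to(P,\iota_P)$) to match the direction you wrote.
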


\begin{proof}
This is immediate from \fullref{NoTrCbar}.
\end{proof}

A comment on the categories $\bar{C}_{\L}({\mathcal{C}};E)$ is in place.
If ${\mathcal{C}}$ is the collection of all the $\F$--centric subgroups of
$S$ and $E$ is fully $\F$--centralised, then it is shown by Broto, Levi
and Oliver \cite[Theorem 2.6]{BLO2} that $|\bar{C}_\L({\mathcal{C}};E)|$
has the homotopy type of the nerve of the centraliser linking system
$|C_\L(E)|$.  The categories $\No_\L({\mathcal{C}};\EEE)$ are more
mysterious.  Even when $\EEE$ is a $1$--simplex $E$, the category
$\No_\L({\mathcal{C}};E)$ is in general only a subcategory of the
normaliser linking system $N_\L(E)$ because the largest subgroup which
appears as an object of $\No_\L({\mathcal{C}};E)$ is $C_S(E)$ which in
general is  smaller than $N_S(E)$.  When $C_S(E)=N_S(E)$ these categories
are equal.

The next decomposition result, \fullref{thmB}, depends on a collection of 
elementary abelian groups
$\E$ and a collection ${\mathcal{C}}$ of $\F$--centric subgroups of $S$.
It approximates $\L$ if ${\mathcal{C}}$ contains, for example, all the
$\F$--centric $\F$--radical
subgroups of $S$.
The collection $\E$ must be large enough as explicitly stated in the theorem.
For example the collection of all the non-trivial elementary abelian subgroups will always be
a valid choice.

\begin{defn}
\label{def omega_p}
Given a group $H$ and a prime $p$ let $\Omega_p(H)$ denote the subgroup of $H$ generated
by all the elements of order $p$ in $H$.
\end{defn}

\begin{thmb}
\label{thmB}
Consider a $p$--local finite group $(S,\F,\L)$, an $\F$--centric collection ${\mathcal{C}}$ and a collection
$\E$ of elementary abelian subgroup of $S$ which contains the subgroups $\Omega_pZ(P)$
for all $P\in {\mathcal{C}}$.
Then there exists a functor $\de_\E\co\bsd\E \to \spaces$ with the following properties.
\begin{enumerate}
\item
There is a natural weak homotopy equivalence
$
\hhocolim{\bsd\E} \, \de_\E \xto{\ \ \simeq \ \ } |\L^{\mathcal{C}}|.
$
\label{thmB:target}

\item
For a $k$--simplex $\EEE$ in $\E$ there is a
weak homotopy equivalence
\label{thmB:terms}
$$
|\bar{C}_{\L}({\mathcal{C}};E_k)|_{h\Aut_\F(\EEE)} 
\xto{ \ \ \simeq \ \ }
\de_\E([\EEE]).
$$
\item
Fix a $k$--simplex $\EEE$ where $E_k$ is fully $\F$--centralised.
The equivalences \eqref{thmB:target} and \eqref{thmB:terms} give a natural map 
$\de_\E([\EEE]) \to |\L^{\mathcal{C}}|$ whose precomposition with $\epsilon_\EEE$ 
of \fullref{No equiv orbit} is induced by the realization of  the inclusion
of $\No_\L({\mathcal{C}};\EEE)$ in $\L^{\mathcal{C}}$.
\label{thmB:augment}

\item
If $\EEE'$ is a $k$--subsimplex of an $n$--simplex $\EEE$  then the following square commutes 
up to homotopy
$$
\xymatrix{
|\bar{C}_{\L}({\mathcal{C}};E_n)|_{h\Aut_\F(\EEE)} 
\ar[d] \ar[rr]^\simeq & &
\de_\E([\EEE]) \ar[d] 
\\
|\bar{C}_{\L}({\mathcal{C}};E'_k)|_{h\Aut_\F(\EEE')} 
\ar[rr]^\simeq & &
\de_\E([\EEE']) 
}
$$
The homotopy is natural with respect to inclusion of simplices.
In addition, the square commutes on the nose if $E_k'=E_n$.
\label{thmB:maps}
\end{enumerate}
\end{thmb}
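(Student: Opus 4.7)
The plan is to mimic the construction used for Theorem A, using \fullref{No equiv orbit} as the bridge between the elementary abelian data and the linking system data. First I would construct an auxiliary category $\mathcal{G}$ whose objects are pairs $(\EEE,P)$ consisting of a simplex $\EEE$ in $\E$ together with an object $P$ of $\No_\L({\mathcal{C}};\EEE)$, with morphisms built from both inclusions of subsimplices of $\EEE$ and morphisms $\vp \in \L^{\mathcal{C}}(P,Q)$ whose induced map on $E_k$ lies in $\Aut_\F(\EEE)$. A Grothendieck--Thomason style argument then identifies $|\mathcal{G}|$ with $\hhocolim{\bsd\E}\,|\No_\L({\mathcal{C}};-)|$ after choosing representatives in each conjugacy class; this gives a candidate for the decomposition.

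Second, I would show that the forgetful functor $u\co\mathcal{G} \to \L^{\mathcal{C}}$ induces a weak homotopy equivalence on nerves. This is where the hypothesis $\Omega_pZ(P)\in\E$ for all $P\in{\mathcal{C}}$ plays its essential role: for each $P\in\L^{\mathcal{C}}$ the overcategory $u/P$ contains the canonical object corresponding to the $0$--simplex $\Omega_pZ(P)$, and a cofinality argument based on intersecting chains of subgroups in the center of $P$ (together with \fullref{restn in L} applied to the lifts $\iota_P^Q$) should show that $u/P$ is contractible. Quillen's Theorem A then yields $|\mathcal{G}|\simeq|\L^{\mathcal{C}}|$, establishing part \eqref{thmB:target}.

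Third, to move from $|\No_\L({\mathcal{C}};\EEE)|$ to the form stated in \eqref{thmB:terms}, I would choose for each conjugacy class $[\EEE]$ a representative whose top term $E_k$ is fully $\F$--centralised (this is always possible by the standard theory recalled in \fullref{sec plfg}), and apply \fullref{No equiv orbit} to define
$$
\de_\E([\EEE]) \;=\; |\bar{C}_\L({\mathcal{C}};E_k)|_{h\Aut_\F(\EEE)}.
$$
The naturality clause of \fullref{No equiv orbit} supplies the functoriality in $\bsd\E$ and the compatibility in \eqref{thmB:augment}. For \eqref{thmB:maps}, the square commutes on the nose when $E'_k=E_n$ because the Borel construction in $|\bar{C}_\L({\mathcal{C}};E_n)|_{h\Aut_\F(\EEE)}$ is taken over the same centraliser category as for $\EEE'$; when $E'_k\ne E_n$ the map involves the canonical restriction $|\bar{C}_\L({\mathcal{C}};E_n)|\to|\bar{C}_\L({\mathcal{C}};E'_k)|$ and the commutativity holds only up to a canonical homotopy arising from the choice of homotopy inverse to $\epsilon_\EEE$.

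The main obstacle will be the verification in the second step that $u\co\mathcal{G}\to\L^{\mathcal{C}}$ is an equivalence. The hypothesis on $\E$ is very mild and the argument has to proceed intrinsically in the linking system, without recourse to Sylow theory or an ambient finite group. Specifically, one has to show that the overcategory $u/P$ deformation retracts onto the sub-poset of chains ending at $\Omega_pZ(P)$ (or some explicitly contractible subcomplex), using only the structure axioms of $(S,\F,\L)$ and the properties of the morphisms $\iota_P^Q$. All subsequent compatibility assertions are then essentially formal consequences of the naturality of the Grothendieck construction combined with \fullref{No equiv orbit}.
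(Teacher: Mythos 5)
Your approach diverges from the paper's and, as written, has a genuine gap at its centre. The paper's proof hinges on a single clean observation: because every $f\in\F(E,Z(P))$ with $E$ elementary abelian factors through $\Omega_pZ(P)$, the hypothesis $\Omega_pZ(P)\in\E$ makes the assignment $\zeta(P)=\Omega_pZ(P)$ into a functor $\zeta\co\L^{\mathcal{C}}\to{\F^\E}^{\op}$, and one then has the \emph{exact identity} $\bar{C}_\L({\mathcal{C}};E)=(\zeta\darrow E)$. From there everything is formal: Segal's pushdown theorem computes $\hocolim_{{\F^\E}^{\op}}\zeta_*(\star)\simeq|\L^{\mathcal{C}}|$, cofinality of $\mu\co s(\F^\E)\to{\F^\E}^{\op}$ pulls this to the subdivision category, and $\pi_*$ together with \fullref{commapi} produces the Borel constructions $|\bar{C}_\L({\mathcal{C}};E_k)|_{h\Aut_\F(\EEE)}$ on the nose and delivers all the naturality assertions. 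The hypothesis on $\E$ is used precisely to make $\zeta$ exist as a functor, not to establish a contractibility statement.

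Your proposal instead routes the argument through $\No_\L({\mathcal{C}};\EEE)$ and a Quillen Theorem~A argument for a forgetful functor $u\co\mathcal{G}\to\L^{\mathcal{C}}$, and this is where the gap sits. You identify the contractibility of the overcategories $u/P$ as the main obstacle but do not actually prove it, and it is not a soft statement: $(\Omega_pZ(P),P,\id_P)$ is not visibly initial or terminal in $u/P$, and the suggested deformation retraction onto chains in $Z(P)$ needs a real argument in the linking system. There are two further unaddressed problems. First, $\No_\L({\mathcal{C}};-)$ is only a functor on actual simplices, not on $\bsd\E$, so the identification $|\mathcal{G}|\simeq\hocolim_{\bsd\E}|\No_\L({\mathcal{C}};-)|$ requires a coherent choice of representatives together with conjugation isomorphisms; this is exactly what the paper's $\pi_*$ machinery via \fullref{commapi} is designed to package, and it cannot be waved away. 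Second, you propose to \emph{define} $\de_\E([\EEE])=|\bar{C}_\L({\mathcal{C}};E_k)|_{h\Aut_\F(\EEE)}$ for a fully $\F$--centralised representative and then appeal to \fullref{No equiv orbit} to reconcile this with $\No_\L$; but that proposition gives only a homotopy equivalence, not an equality, so the claimed identification $\hocolim_{\bsd\E}\de_\E\simeq|\mathcal{G}|$ does not come for free and needs a coherence argument for the comparison maps $\epsilon_\EEE$ across the whole diagram. In short, the plan replaces one formal Kan-extension computation with an unproved contractibility claim plus several coherence issues, and would need substantial additional work to close.
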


\subsubsection*{Acknowledgments}
The author was supported by grant NAL/00735/G from the Nuf\-field Foundation.
Part of this work was supported by Institute Mittag-Leffler (Djursholm,
Sweden).

%
%
\section{On $p$--local finite groups}
\label{sec plfg}

The term $p$--local finite group was coined by Broto, Levi and Oliver \cite{BLO2}.
It cropped up naturally in their attempt \cite{BLO1} to
describe the space of self equivalences of a $p$--completed classifying space of a finite 
group $G$.
They discovered that the relevant information needed to solve this problem lies in the
fusion system of the $p$--subgroups of $G$ and certain categories which they later on called 
``linking systems''.
Historically, fusion systems were first introduced by Lluis Puig \cite{Puig}.

\begin{defn}
Fix a prime $p$ and let $S$ be a finite $p$--group.
A \emph{fusion system} over $S$ is a sub-category $\F$ of the category
of groups whose objects are the subgroups of $S$ and whose morphisms
are group monomorphisms such that
\begin{itemize}
\item[(1)]
All the monomorphisms that are induced by conjugation by elements of
$S$ are in $\F$.

\item[(2)]
Every morphism in $\F$ factors as an isomorphism in $\F$ followed by
an inclusion of subgroups.
\end{itemize}
We say that two subgroups $P,Q$ of $S$ are $\F$--\emph{conjugate} if
they are isomorphic as objects of $\F$.
\end{defn}

When $g$ is an element of $S$ and $P,Q$ are subgroups of $S$ such that
$g P g^{-1}\leq Q$, we let $c_g$ denote the morphism $P \to Q$ defined
by conjugation, namely $c_g(x)=gxg^{-1}$ for every $x\in P$.

We let $\Hom_S(P,Q)$ denote the set of all the morphisms $P \to Q$ in
$\F$ that are induced by conjugation in $S$.
Also notice that the factorization axiom (2) implies that all the
$\F$--endomorphisms of a subgroup $P$ are in fact automorphisms in $\F$.
Thus we write $\Aut_\F(P)$ for the set of morphisms $\F(P,P)$.

\begin{defn}
A subgroup $P$ of $S$ is called \emph{fully $\F$--centralised} 
(resp. \emph{fully $\F$--normalised}) if
its $S$--centraliser $C_S(P)$ (resp. $S$--normaliser $N_S(P)$) has the
maximal possible order in the
$\F$--conjugacy class of $P$.
That is, $|C_S(P)|\geq |C_S(P')|$ (resp. $|N_S(P)|\geq |N_S(P')|$) for
every $P'$ which is 
$\F$--conjugate to $P$.
\end{defn}

\begin{defn}
\label{def sat fus}
A fusion system $\F$ over a finite $p$--group $S$ is called
\emph{saturated} if
\begin{itemize}
\item[I]
Every fully $\F$--normalised subgroup $P$ of $S$ is fully $\F$--centralised and
moreover $\Aut_S(P)=N_S(P)/C_S(P)$ is a Sylow $p$--subgroup of $\Aut_\F(P)$.

\item[II]
Every morphism $\func{\vp}{P}{S}$ in $\F$ whose image $\vp(P)$ is fully
$\F$--centralised can be extended to a morphism $\func{\psi}{N_\vp}{S}$ in $\F$
where
$$
N_{\vp}=\{ g\in N_S(P) : \vp c_g\vp^{-1} \in\Aut_S(P)\}.
$$
\end{itemize}
\end{defn}

\begin{defn}
A subgroup $P$ of $S$ is called $\F$--\emph{centric} if $P$ and all of
its $\F$--conjugates contain their $S$--centralisers, that is
$C_S(P')=Z(P')$ for every subgroup $P'$ of $S$ which is $\F$--conjugate
to $P$.
\end{defn}

\begin{defn}
A \emph{centric linking system} associated to a saturated fusion system $\F$ over $S$
consists of
\begin{enumerate}
\item
A small category $\L$ whose objects are the $\F$--centric subgroups
of $S$,

\item
a functor $\pi\co\L \to \F$ and

\item
group monomorphisms $\de_P\co P \to \Aut_\L(P)$ for every $\F$--centric
subgroup $P$ of $S$,
\end{enumerate}
Such that the following axioms hold
\begin{itemize}
\item[(A)]
The functor $\pi$ acts as the inclusion on object sets, that is $\pi(P)=P$
for every $\F$--centric subgroup $P$ of $S$.
For any two objects $P,Q$ of $\L$, the group $Z(P)$ acts freely on the
morphism set $\L(P,Q)$ via the restriction of $\de_P\co P \to \Aut_\L(P)$
to $Z(P)$.
The induced map on morphisms sets
$$
\pi\co\L(P,Q) \to \F(P,Q)
$$
identifies $\F(P,Q)$ with the quotient of $\L(P,Q)$ by the free action of $Z(P)$.

\item[(B)]
For every $\F$--centric subgroup $P$ of $S$ the map $\pi\co\Aut_\L(P) \to
\Aut_\F(P)$ sends $\de_P(g)$, where $g\in P$, to $c_g$.

\item[(C)]
For every $f\in\L(P,Q)$ and every $g\in P$ there is a commutative
square in $\L$
$$
\begin{CD}
P @>{f}>> Q
\\
@V{\de_P(g)}VV
@VV{\de_Q(\pi(f)(g))}V
\\
P @>>{f}> Q.
\end{CD}
$$
\end{itemize}
\end{defn}

\begin{remarkx}
A morphism $f\in\L(P,Q)$ is called a \emph{lift} of a morphism
$\vp\in\F(P,Q)$ if $\vp=\pi(f)$. 
\end{remarkx}

\begin{defn}
A $p$--\emph{local finite group} is  a triple $(S,\F,\L)$ where $\F$ is
a saturated fusion system over the finite $p$--group $S$ and $\L$ is a
centric linking system associated to $\F$.
The \emph{classifying space} of $(S,\F,\L)$ is the space
$\pcomp{|\L|}$, that is the $p$--completion in the sense of Bousfield
and Kan \cite{BK}, of the realization of the small category $\L$.
\end{defn}

\begin{void}
\label{non exotic examples}
When $S$ is a Sylow $p$--subgroup of a finite group $G$, there is an
associated $p$--local finite group denoted $(S,\F_S(G),\L_S(G))$.
See \cite[Proposition~1.3, remarks after Definition~1.8]{BLO2}.
We shall write $\F$ for $\F_S(G)$ and $\L$ for $\L_S(G)$.

Morphism sets between $P,Q\leq S$ are
$$
\F(P,Q)=\Hom_G(P,Q)=N_G(P,Q)/C_G(P)
$$
where $N_G(P,Q)=\{g\in G : gPg^{-1}\leq Q\}$ and $C_G(P)$ acts on $N_G(P,Q)$ 
by right translation. 

A subgroup $P$ of $S$ is, by \cite[Proposition~1.3]{BLO2}, $\F$--centric precisely when it is
$p$--centric in the sense of \cite[Section~1.19]{Dwyer-decomp},
that is, $Z(P)$ is a Sylow $p$--subgroup of $C_G(P)$.
In this case $C_G(P)=Z(P)\times C'_G(P)$ where $C_G'(P)$ is the
maximal subgroup of $C_G(P)$ of order prime to $p$.
Morphism sets of $\L=\L_S(G)$ have, by definition, the form
$$
\L(P,Q) = N(P,Q)/C_G'(P).
$$
The functor $\pi\co\L_S(G)\to\F_S(G)$ is the obvious projection functor.
The monomorphism $\de_P\co P\to \Aut_\L(P)$ is induced by the inclusion of $P$ in $N_G(P)$.

It is shown by Broto, Levi and Oliver \cite[after Definition~1.8]{BLO2}
that $(S,\F_S(G),\L_S(G))$ is a
$p$--local finite group and that $\pcomp{|\L_S(G)|}\simeq \pcomp{BG}$.
It should also be remarked that there are examples of $p$--local finite groups that cannot be
 associated with any finite group.
These are usually referred to as ``exotic examples''.
\end{void}

\begin{void}
\label{iota morphisms}
In every $p$--local finite group $(S,\F,\L)$ one can choose morphisms  $\iota_P^Q\in\L(P,Q)$ for
every inclusion of 
$\F$--centric subgroups $P\leq Q$, in such a way that
\begin{enumerate}
\item
$\pi(\iota_P^Q)$ is the inclusion $P\leq Q$,
\item
$\iota_Q^R\circ \iota_P^Q=\iota_P^R$ for every $\F$--centric subgroups $P\leq
  Q\leq R$ of $S$, and
\item
$\iota_P^P=\id$ for every $\F$--centric subgroup $P$ of $S$.
\end{enumerate}
This follows from \cite[Proposition 1.11]{BLO2}.
Using the
notation there, one chooses $\iota^Q_P=\delta_{P,Q}(e)$ where $e$ is
the identity element in $S$.
Whenever possible, in order to avoid cumbersome notation, we shall
write $\iota$ for $\iota_P^Q$.
\end{void}

\begin{void}
\label{unique factor in L}
From \cite[Lemma 1.10(a)]{BLO2} it also follows that every morphism
$\vp\co P \to Q$ in $\L$ factors uniquely as an isomorphism $\vp'\co P \to
P'$ in $\L$ followed by the morphism $\iota\co P' \to Q$.
In fact $P'=\pi(\vp)(P)$
\end{void}

\begin{void}
\label{morphisms in L are mono and epi}
It was observed by Broto, Levi and Oliver \cite[remarks after
Lemma~1.10]{BLO2} that
every morphism in $\L$ 
is a monomorphism in the categorical sense.
It was later observed by Broto, Castellana, Grodal, Levi and Oliver
\cite[Corollary 3.10]{BCGLO1} and independently by others, that every
morphism in $\L$ is also an epimorphism.  As an easy consequence we
record for further use:
\end{void}

\begin{prop}
\label{restn in L}
Consider a $p$--local finite group $(S,\F,\L)$ and a commutative square in $\F$ on the
left of the display below
$$
\begin{CD}
P @>{f}>> P'
\\
@V{\incl}VV @VV{\incl}V
\\
Q @>>{g}> Q'
\end{CD}
\qquad
\qquad
\qquad
\qquad
\qquad
\begin{CD}
P @>{\tilde{f}}>> P'
\\
@V{\iota_P^Q}VV @VV{\iota_{P'}^{Q'}}V
\\
Q @>>{\tilde{g}}> Q'
\end{CD}
$$ 
where $P,P',Q$ and $Q'$ are $\F$--centric subgroups of $S$.
Then for every lift $\tilde{g}$ of $g$ in $\L$ there exists a unique lift $\tilde{f}$
of $f$ in $\L$ which render the square on the right commutative in $\L$.
We denote $\tilde{f}$ by $\tilde{g}|_P$.

Given a lift $\tilde{f}$ for $f$, if there exists a lift $\tilde{g}$ for $g$ rendering the 
square on the right commutative, then it is unique.
\end{prop}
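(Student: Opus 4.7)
The plan is to build $\tilde f$ by applying the unique factorisation of morphisms in $\L$ (see \fullref{unique factor in L}) to the composite $\tilde g\circ\iota_P^Q$, and to deduce both uniqueness statements from the fact recalled in \fullref{morphisms in L are mono and epi} that every morphism of $\L$ is simultaneously mono and epi.

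First I would compute $\pi(\tilde g\circ\iota_P^Q)=g\circ\incl_P^Q=\incl_{P'}^{Q'}\circ f$, so the morphism $\tilde g\circ\iota_P^Q\co P\to Q'$ in $\L$ is a lift of $\incl_{P'}^{Q'}\circ f$ and its image, in the sense of \fullref{unique factor in L}, is the subgroup $f(P)\leq P'$. Hence this composite factors uniquely in $\L$ as an isomorphism $\vp\co P\xrightarrow{\cong} f(P)$ (with $\pi(\vp)=f|^{f(P)}$) followed by $\iota_{f(P)}^{Q'}$. Using the cocycle relation $\iota_{f(P)}^{Q'}=\iota_{P'}^{Q'}\circ\iota_{f(P)}^{P'}$ from \fullref{iota morphisms}, I would then define
$$
\tilde f \;=\; \iota_{f(P)}^{P'}\circ\vp.
$$
By construction $\tilde f$ is a morphism $P\to P'$ in $\L$, it lifts $f$, and
$$
\iota_{P'}^{Q'}\circ\tilde f \;=\; \iota_{P'}^{Q'}\circ\iota_{f(P)}^{P'}\circ\vp \;=\; \iota_{f(P)}^{Q'}\circ\vp \;=\; \tilde g\circ\iota_P^Q,
$$
which is the commutativity of the right-hand square.

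For the uniqueness of $\tilde f$ given $\tilde g$, I would observe that if $\tilde f_1$ and $\tilde f_2$ both lift $f$ and satisfy $\iota_{P'}^{Q'}\circ\tilde f_i=\tilde g\circ\iota_P^Q$, then cancelling $\iota_{P'}^{Q'}$ on the left — permissible because it is a monomorphism in $\L$ — gives $\tilde f_1=\tilde f_2$. The final assertion of the proposition is the dual: if $\tilde f$ is fixed and $\tilde g_1,\tilde g_2$ are two lifts of $g$ making the square commute, then $\tilde g_1\circ\iota_P^Q=\tilde g_2\circ\iota_P^Q$, and right-cancellation of $\iota_P^Q$ as an epimorphism in $\L$ yields $\tilde g_1=\tilde g_2$.

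There is no real obstacle here: the content is entirely a diagram chase once one has the unique factorisation of morphisms in $\L$ and the mono/epi property, both of which are quoted from the paper. The only point that requires a moment of care is identifying the ``image'' subgroup produced by the factorisation as $f(P)$ rather than $g(P)$, which is immediate from the commutativity of the square in $\F$.
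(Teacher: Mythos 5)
Your proof is correct. The route is essentially the same as the paper's, but rerouted through the paper's own intermediate facts rather than the original source. The paper disposes of the existence and uniqueness of $\tilde f$ in one stroke by citing \cite[Lemma 1.10(a)]{BLO2} directly, with the substitution $\psi=\incl_{P'}^{Q'}$, $\tilde\psi=\iota_{P'}^{Q'}$, $\widetilde{\psi\vp}=\tilde g\circ\iota_P^Q$; you instead apply the unique-factorisation statement \fullref{unique factor in L} (itself a corollary of that same Lemma 1.10(a)) to produce $\tilde f=\iota_{f(P)}^{P'}\circ\vp$, and then supply uniqueness separately from the monomorphism property of $\iota_{P'}^{Q'}$. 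The net effect is identical, and your version has the small advantage of being self-contained within the paper's stated preliminaries (\fullref{unique factor in L} and \fullref{morphisms in L are mono and epi}) rather than requiring the reader to unwind the exact content of the cited lemma. Both proofs handle the last assertion the same way, by right-cancelling the epimorphism $\iota_P^Q$. One small checkpoint you implicitly pass but could flag explicitly: the factorisation target $f(P)$ is $\F$--centric (so the morphism $\iota_{f(P)}^{P'}$ really exists in $\L$) because $f(P)$ is $\F$--conjugate to the $\F$--centric group $P$.
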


\begin{proof}
The first assertion follows immediately from \cite[Lemma 1.10(a)]{BLO2} by setting
$\psi=\incl_{P'}^{Q'}, \tilde{\psi}=\iota_{P'}^{Q'}$ and $\tilde{\psi\vp}=\tilde{g}\iota_P^Q$.
The second assertion follows immediately from the fact that $\iota_P^Q$ is an epimorphism.
\end{proof}

\begin{void}
\label{centraliser and normaliser systems}
Fix a $p$--local finite group $(S,\F,\L)$.
Given a subgroup $P$ of $S$, there are two important $p$--local finite
groups associated with it: the centraliser of $P$ when $P$ is fully $\F$--centralised and
the normaliser of $P$ when $P$ is fully $\F$--normalised.
Both were defined by Broto Levi and Oliver in \cite{BLO2}.

The centraliser fusion system $C_\F(P)$, where $P$ is fully
$\F$--centralised, is a subcategory of $\F$.
As a fusion system it is defined over the $S$--centraliser of $P$
denoted $C_S(P)$. 
Morphisms $Q \to Q'$ in $C_\F(P)$ are those morphisms $\vp\co Q \to Q'$ in
$\F$ that can be extended to a morphism $\bar{\vp}\co PQ \to PQ'$ in $\F$
which induces the identity on $P$.
The objects of the centric linking system $C_\L(P)$ associated to
$C_\F(P)$ are
the $C_\F(P)$--centric subgroups of $C_S(P)$.
The set of morphisms $Q \to Q'$ in $C_\L(P)$ is a subset of $\L(PQ,PQ')$
consists of those morphisms $f\co PQ \to PQ'$ such that $\pi(f)$ induces
the identity on $P$ and carries $Q$ to $Q'$.
It is shown in \cite{BLO2} that $(C_S(P),C_\F(P),C_\L(P))$ is a
$p$--local finite group.

Now fix a subgroup $K \leq \Aut_\F(P)$ where $P$ is fully normalised
in $\F$.
The $K$--normaliser fusion system $N^K_\F(P)$ is a subcategory of $\F$
defined over $N_S(P)$. 
The objects of $N_\F^K(P)$ are the subgroups of $N_S(P)$.
A morphisms $\vp\in\F(Q,Q')$ belongs to $N^K_\F(P)$ if it
can be extended to a morphism $\bar{\vp}\co PQ \to PQ'$ in $\F$ which
induces an automorphism from $K$ on $P$.
The fusion system $N_\F^K(P)$ is saturated.
When $K=\Aut_\F(P)$ we denote this category by $N_\F(P)$ and call it
the normaliser fusion system of $P$.
The centric linking system $N_\L(P)$ associated to $N_\F(P)$ has the
$N_\F(P)$--centric subgroups of $N_S(P)$ as its object set.
The set of morphisms $Q \to Q'$ is the subset of $\L(PQ,PQ')$
consisting of those $f\co PQ \to PQ'$ such that $\pi(f)$ carries $Q$ to
$Q'$ and induces an automorphism on $P$.
\end{void}

%
%
\section{The Grothendieck construction}
\label{sec homotopy colimits}
Throughout this paper we work simplicially, namely a ``space'' means a
simplicial set.  For further details, the reader is referred to Bousfeld
and Kan \cite{BK}, May \cite{May-ss}, Goerss and Jardine \cite{Goerss-sht}
and many other sources.  In this section we collect several results from
general simplicial homotopy theory that we shall use repeatedly in the
rest of this note.


\textbf{Homotopy colimits}\qua 
Fix a small category $\KKK$ and a functor $U\co\KKK\to\spaces$.
The simplicial replacement of $U$ is the simplicial space $\coprod_*U$
which has in simplicial dimension $n$ the disjoint union of the spaces $U(K_0)$ for
every chain
$$K_0 \to K_1 \to \cdots \to K_n$$
of $n$ composable arrows in $\KKK$.
The homotopy colimit of $U$ denoted $\hhocolim{\KKK}U$ is the diagonal of $\coprod_*U$
regarded as a bisimplicial set. 
See Bousfield and Kan \cite[Section~XII.5]{BK}.

Consider a functor $F\co\KKK \to \LLL$ between small categories.
For a functor $U\co\LLL\to \spaces$ there is an obvious natural map, cf
\cite[Section~XI.9]{BK}.
$$
\hhocolim{\KKK} F^*U \to \hhocolim{\LLL} U.
$$
For an object $L\in \LLL$, the comma category $(L \darrow F)$ has
the pairs $\smash{(K, L \xto{k\in\KKK} FK)}$ as its objects.
Morphisms $(K,L
\smash{\stackrel{\raisebox{-1pt}{\scriptsize$k$}}{\longrightarrow}} FK)
\to (K',L
\smash{\stackrel{\raisebox{-1pt}{\scriptsize$k'$}}{\longrightarrow}} FK')$ are the morphisms
$x\co K \to K'$
such that $Fx \circ k = k'$.
Similarly one defines the category $(F \darrow L)$ whose object set consists of the pairs
$(K,k\co FK\to L)$.
Compare MacLane \cite{MacLane-working}.

\begin{defn}
\label{def right cofinal}
The functor $F\co\KKK \to \LLL$ is called right-cofinal if for every
object $L\in \LLL$ the category $(L \darrow F)$ has a contractible nerve.
\end{defn}

The following theorem was probably first proved by Quillen \cite[Theorem A]{Quillen}.
See also Hollender and Vogt \cite[Section~4.4]{Ho-Vo} and 
Bousfield and Kan \cite[Section~XI.9]{BK}.

\begin{cofthm}
\label{cofinal thm}
Let $F\co\KKK \to \LLL$ be a right cofinal functor between small categories.
Then for every functor $U\co\LLL\to\spaces$ the natural map 
$$\hhocolim{\KKK}F^*U\to\hhocolim{\LLL}U$$
is a weak homotopy equivalence.
\end{cofthm}
Associated with a functor $U\co\KKK\to\spaces$ there is a functor 
$F_*U\co\LLL\to\spaces$ called the homotopy left Kan extension of $U$ along $F$.
It is defined on every object $L\in\LLL$ by
$$
F_*U(L) = \hocolim \Big(
(F\darrow L) \xto{\text{proj}} \KKK \xto{F} \spaces
\Big).
$$
See \cite[Section~5]{Ho-Vo}, \cite[Section~6]{Dw-Ka}.
The following theorem is originally due to Segal.
See eg \cite[Theorem 5.5]{Ho-Vo}.

\begin{pdthm}
\label{pushdown thm}
Fix a functor $F\co\KKK\to\LLL$ of small categories.
Then for every functor $U\co\KKK\to\spaces$ there is a natural weak homotopy equivalence
$$
\hhocolim{\LLL} F_*U \xto{ \ \ \simeq \ \ } \hhocolim{\KKK}U.
$$
\end{pdthm}

\textbf{The Grothendieck construction}\qua
Recall that a small category $\KKK$ gives rise to a simplicial set
$\Nr(\KKK)$ called the \emph{nerve} of $\KKK$.
Its $n$--simplices are the chains of $n$ composable arrows 
$K_0\to K_1\to\cdots\to K_n$ in $\KKK$.
See, for example, Goerss and Jardine \cite[Example 1.4]{Goerss-sht} or
Bousfield and Kan \cite[Section~XI.2]{BK}.
We shall also use the notation $|\KKK|$ for the nerve of $\KKK$.

Given a functor $U\co\KKK\to\Cat$ Thomason \cite{Thomason} defined the
translation category $\KKK\int U$ associated to $U$ as follows.
The object set consists of pairs $(K,X)$ where $K$
is an object of $\KKK$ and $X$ is an object of $U(K)$.
Morphisms $(K_0,X_0)\to(K_1,X_1)$ are pairs $(k,x)$
where $k\co K_0\to K_1$ is a morphism in $\KKK$ and $x\co U(k)(X_0) \to X_1$
is a morphism in $U(K_1)$.
Composition of $(K_0,X_0) \smash{\raisebox{-2pt}{$\xto{(k_0,x_0)}$}} (K_1,X_1)$ and
$(K_1,X_1) \smash{\raisebox{-2pt}{$\xto{(k_1,x_1)}$}} (K_2,X_2)$ is given by 
$$
(k_1,x_1)\circ(x_0,k_0) = (k_1\circ k_0, x_1\circ U(k_1)(x_0)).
$$
This category is also called the Grothendieck construction of $U$ and the notation
$\Tr_\KKK U$ is also used.
Thomason \cite{Thomason} shows that there is a natural weak homotopy
equivalence
\begin{equation}
\label{Thomason map}
\eta\co \hhocolim{\KKK}\, |U| \xto{\ \ \simeq \ \ } |\Tr_{\KKK}U|
\end{equation}
A natural transformation $U\Rightarrow U'$ gives
rise to a canonical functor $\Tr_\KKK U \to \Tr_{\KKK}U'$.
The induced map $|\Tr_\bfK(U)|\to|\Tr_\bfK(U')|$ corresponds via
$\eta$ \eqref{Thomason map} to the
induced map $\hhocolim{\bfK}\, |U| \to \hhocolim{\bfK}\, |U'|$.
Furthermore, for every object $K$ in $\KKK$ the natural map
$$
|U(K)| \to \hhocolim{\bfK}\, |U|
$$
corresponds under \eqref{Thomason map} to the inclusion of categories
\begin{equation}
\label{translation cone}
U(K) \to \Tr_{\KKK}\, U, \qquad \text{ where }  X \mapsto (K,X) \text{ and }
x \mapsto (1_K,x).
\end{equation}
%
%
%
Consider now a functor $F\co\KKK\to\LLL$ of small categories.
Given $U\co \LLL\to\cat$ there is a naturally defined functor
\begin{equation}
\label{def F shriek}
F_! \co  \Tr_\KKK F^*U \to \Tr_\LLL U,
\qquad \text{where} \qquad
\left\{
\begin{array}{l}
F_!(K,X\in F^*U(K)) = (FK,X) \\
F_!(k,x) = (Fk,x).
\end{array}
\right.
\end{equation}
The functor $F_!$ is a model for the map $\hocolim\,F^*|U| \to \hocolim\,|U|$ in the sense
that the following square commutes
$$
\begin{CD}
|\Tr_\KKK F^*U| 
@>{\eta}>>
\hhocolim{\KKK} F^*|U|
\\
@V{|F_!|}VV
@VVV
\\
|\Tr_\LLL U| 
@>{\eta}>>
\hhocolim{\LLL} |U|
\end{CD}
$$

\begin{defn}
\label{def catF star}
For a functor $U\co \KKK\to\cat$ define $F_*U\co \LLL\to\cat$ by
$$
F_*U(L) = \Tr \bigl(
(F\darrow L) \xto{\text{proj}} \KKK \xto{U} \spaces
\bigr)
$$
\end{defn}
The maps $\eta$ \eqref{Thomason map} provide a natural weak homotopy equivalence
$|F_*U| \xto{\simeq} F_*|U|$.
The equivalence in the pushdown theorem can be realized as the nerve of a functor
between the transporter categories as follows.

\begin{prop}
\label{def F sharp}
The functor $F_\# \co  \Tr_\LLL F_*U \to \Tr_\KKK U$ defined by
$$
\begin{array}{l}
F_\# \co  \big( L,(K,FK \to L),X\in UK \big) \mapsto (K,X) \\
F_\# \co  \big( L \xto{\ell} L', K \xto{k} K', U(k)(X) \xto{x} X' \big) \mapsto (k,x).
\end{array}
$$
renders the following diagram commutative where the arrow at the top of the square is 
an equivalence by the pushdown theorem.
\begin{equation}
\xymatrix{
{\hhocolim{\LLL}}\, |F_*U| 
\ar[dr]^\eta_\simeq
&
\hhocolim{\LLL}\, F_*|U|
\ar[r]^{\hbox{\footnotesize$\sim$}}
\ar[d]_{\hbox{\footnotesize$\sim$}}
\ar[l]_{\hbox{\footnotesize$\sim$}}
&
\hhocolim{\KKK}\, |U|
\ar[d]_{\hbox{\footnotesize$\sim$}}^\eta
\\
&
|\Tr_{\LLL}(F_*U)|
\ar[r]_{|F_\#|} &
|\Tr_{\KKK}(U)|
}
\end{equation}
\end{prop}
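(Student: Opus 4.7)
The plan is to proceed in three steps, verifying functoriality of $F_\#$ first, then handling the two pieces of the diagram separately.

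First I would verify that $F_\#$ is a well-defined functor. Unfolding the definition of a morphism in $\Tr_\LLL F_*U$ from $(L,K,f:FK\to L,X)$ to $(L',K',f':FK'\to L',X')$, one finds that it is precisely a triple $(\ell:L\to L',\ g:K\to K',\ x:U(g)(X)\to X')$ satisfying $f'\circ F(g)=\ell\circ f$. Taking a second composable triple $(\ell',g',x')$, composition in $\Tr_\LLL F_*U$ involves applying $F_*U(\ell')$ to the morphism $(g,x)$ sitting in $F_*U(L')$ before composing inside $F_*U(L'')$; this yields $(\ell'\ell,\ g'g,\ x'\circ U(g')(x))$. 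Applying $F_\#$ gives $(g'g,x'\circ U(g')(x))$, which is exactly the composite of $(g,x)$ and $(g',x')$ in $\Tr_\KKK U$.

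For the left triangle of the displayed diagram, I would note that the equivalence $F_*|U|(L)\xto{\simeq}|F_*U|(L)$ is, by construction of $F_*U$ in \fullref{def catF star}, the Thomason map $\eta$ applied to the functor $(F\darrow L)\xto{\text{proj}}\KKK\xto{U}\cat$, and is natural in $L$. Passing to $\hhocolim{\LLL}$ produces the horizontal arrow at the top of the square; postcomposing with $\eta$ applied to $F_*U:\LLL\to\cat$ produces the vertical arrow into $|\Tr_\LLL(F_*U)|$, so the triangle commutes by definition.

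The content is the right-hand square. Here I would compute both composites explicitly on the simplicial replacement. An $n$-simplex of $\hhocolim{\LLL}F_*|U|$ amounts to a chain $L_0\to\cdots\to L_n$ in $\LLL$ together with an $n$-simplex of $F_*|U|(L_0)$, which by definition of $F_*|U|$ consists of a chain $(K_0,f_0:FK_0\to L_0)\to\cdots\to(K_n,f_n:FK_n\to L_0)$ in $(F\darrow L_0)$ and a chain $X_0\to\cdots\to X_n$ in $U(K_0)$. The pushdown equivalence sends this to the $n$-simplex of $\hhocolim{\KKK}|U|$ given by the underlying chain $K_0\to\cdots\to K_n$ and the same $U(K_0)$-chain (forgetting the $\LLL$-chain and the maps $f_i$); applying $\eta$ produces a chain in $\Tr_\KKK U$ starting at $(K_0,X_0)$. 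Going the other way, $\eta$ first converts the $n$-simplex of $\hhocolim{\LLL}F_*|U|$ into a chain in $\Tr_\LLL(F_*U)$ whose first object is $(L_0,K_0,f_0,X_0)$ and whose morphisms are assembled from the $L_i\to L_{i+1}$, the $K_i\to K_{i+1}$, and the morphisms in $U(K_0)$ (pushed forward along the $K_i$). Applying $F_\#$ strips the $L$-component and the structure maps to produce exactly the same chain in $\Tr_\KKK U$.

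The main obstacle is the bookkeeping in this third step: the chain in $\Tr_\LLL(F_*U)$ is built using the functoriality of $F_*U$ along the $L_i$, which acts on $(K,f,X)$ by post-composing $f$ with the appropriate arrow in $\LLL$. Since $F_\#$ ignores exactly this data, the action is invisible after applying $F_\#$, and the two images in $\Tr_\KKK U$ coincide. Once this identification is made at each simplicial level, commutativity of the square follows, and the whole diagram commutes as claimed.
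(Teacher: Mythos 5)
The paper states this proposition without any proof, treating it as a routine unwinding of the Grothendieck construction, so there is no argument in the paper to compare against. Your proposal supplies exactly the verification that is being left to the reader, and it is correct: the functoriality check for $F_\#$ is complete (the unfolding of morphisms in $\Tr_\LLL F_*U$ as triples $(\ell,g,x)$ with $f'\circ F(g)=\ell\circ f$, and the composition formula $(\ell'\ell,\,g'g,\,x'\circ U(g')(x))$, are both right — the key point being that the functor $F_*U(\ell')$ only changes the $(F\darrow-)$-component of objects, leaving the $\KKK$- and $U$-morphism data untouched), and the simplex-level description of the two composites around the square is the standard and natural way to see that they agree, since the pushdown map and $F_\#$ both simply forget the $\LLL$-chain and the comparison maps $f_i:FK_i\to L_0$. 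This is the expected approach; there is no more elementary or alternative route.
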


It is useful to point out that if $\star\co \KKK \to \cat$ is the constant functor on
the trivial category with one object and an identity morphism, then $\Tr_\KKK(\star)=\Nr(\KKK)$.

%
%
\section{EI categories}
\label{sec EI}

Fix an EI category $\A$, namely a category all of whose endomorphisms are isomorphisms.
We shall assume that the category $\A$ is finite.
We shall also assume that $\A$ is equipped with a height function, namely a function
$h\co \Obj(\A) \to \NN$ such that $h(A) \leq h(A')$ if there exists a morphism $A\to A'$ in $\A$ and equality holds if and only if $A\to A'$ is an isomorphism.
Clearly, if $\A$ is an EI-category then so is $\A^\op$.
The finiteness condition also implies that if $\A$ is heighted then so is $\A^\op$.

We can always choose a full subcategory $\A_{\sk}$ of $\A$ which contains one representative from each isomorphism class of objects in $\A$.
We say that $\A_{\sk}$ is skeletal in $\A$.
Clearly the inclusion $\A_{\sk}\subseteq \A$ is an equivalence of categories.
In the language of S\l omi\'nska \cite{Slominska} $\A_{\sk}$ is an EIA category.

Throughout we let $\underk$ denote the poset $\{0\to 1\to\cdots \to k\}$ considered as a 
small category.

\begin{defn}
\label{def sA}
The subdivision category $s(\A)$ is the category whose objects are height increasing functors 
$\AAA\co \underk\to \A$, namely $h(\AAA(i))<h(\AAA(i+1))$ for all $i<k$.
Morphisms $\AAA \to \AAA'$ in $s(\A)$ are pairs $(\epsilon,\vp)$ where 
$\epsilon\co \underk'\to\underk$ is a strictly increasing function and 
$\vp\co \epsilon^*(\AAA) \to \AAA'$ is a natural isomorphism of functors $\underk' \to \A$.
Composition of $(\epsilon,\vp)\co \AAA\to \AAA'$ and $(\epsilon',\vp')\co \AAA' \to \AAA''$ 
is given by $(\epsilon\circ\epsilon',{\epsilon'}^*(\vp)\circ \vp')$.
\end{defn}
Note that $\epsilon$ is determined by the heights of the values of $\AAA$ namely 
$\epsilon(i)=j$ if and only if $h(\AAA'(i))=h(\AAA(j))$.

We shall further assume that $\A$ contains a subcategory $\I$ which is a 
poset with the property that every morphism $\vp\co A \to A'$ in $A$ can be factored uniquely 
as $\vp = \iota \vp'$ where $\vp'$ is an isomorphism in $\A$ and $\iota$ is a morphism in $\I$.
The ladder
$$
\xymatrix{
{\cdots} \ar[r] &
\AAA(n-2) \ar[r]^{\vp_{n-1}} \ar[d]_{(\vp_n'\circ\vp_{n-1})'}^\cong &
\AAA(n-1) \ar[r]^{\vp_n} \ar[d]_{\vp_n'}^\cong &
\AAA(n) \ar@{=}[d] \\
{\cdots} \ar[r] &
\AAA'(n-2) \ar[r]_{\iota} &
\AAA'(n-1) \ar[r]_\iota &
\AAA'(n)
}
$$
shows that the the full subcategory $s_\I(\A)$ of $s(\A)$ consisting
of the objects $\AAA$ in which all the arrows $\AAA(i) \to \AAA(i+1)$
belong to $\I$ is a skeletal subcategory of $s(\A)$. 
We obtain two skeletal subcategories of $s(\A)$
\begin{equation}
\label{skel sA}
s(\A_{\sk}) \subseteq s(\A) \qquad \text{ and } \qquad s_\I(\A)
\subseteq s(\A). 
\end{equation}
We observe that $\Hom_{s(\A)}(\AAA,\AAA')$ has a free action of $\Aut_{s(\A)}(\AAA')$ 
with a single orbit.
Also every $(\epsilon,\vp)\co  \AAA\to \AAA'$ in $s(\A)$ gives rise to a natural group homomorphism upon 
restriction and conjugation with the isomorphism $\vp\co \epsilon^*\AAA\approx\AAA'$
\begin{equation}
\label{sA auto maps}
\vp_*\co  \Aut_{s(\A)}(\AAA) \to \Aut_{s(\A)}(\AAA').
\end{equation}

\begin{prop}
\label{p cofinal}
There is a right cofinal functor $p\co  s(\A) \to \A$ defined by
$$
p(\AAA) = \AAA(0), \qquad \qquad (\AAA\co \underk \to \A).
$$
\end{prop}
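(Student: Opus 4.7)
The goal is to show that $p$ is right cofinal, i.e., that for every object $A \in \A$ the comma category $(A \darrow p)$ has contractible nerve. I will establish this by exhibiting a deformation retract of $(A \darrow p)$ onto a subcategory that has a terminal object. Let $\E_A$ denote the full subcategory of $(A \darrow p)$ whose objects are pairs $(\AAA, \id_A)$ with $\AAA(0) = A$. The pair $(\langle A \rangle, \id_A)$, where $\langle A \rangle\co \underline{0} \to \A$ is the zero-length chain at $A$, is terminal in $\E_A$: any morphism $(\AAA, \id_A) \to (\langle A \rangle, \id_A)$ in $s(\A)$ consists of $\epsilon\co \underline{0} \to \underk$ together with $\vp(0)\co \AAA(\epsilon(0)) \to A$ satisfying $\vp(0) \circ \AAA(0 \to \epsilon(0)) = \id_A$; since $A$ has strictly minimal height among the vertices of $\AAA$, this forces $\epsilon(0) = 0$ and then $\vp(0) = \id_A$, proving uniqueness. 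Hence $|\E_A|$ is contractible.

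Next I would construct a retraction $R\co (A \darrow p) \to \E_A$ by the ``prepend $A$'' construction. Given $(\AAA, f\co A \to \AAA(0))$ with $\AAA\co \underk \to \A$, define $A * \AAA$ to be the height-increasing chain $(A, \AAA(0), \AAA(1), \ldots, \AAA(k))$ if $h(A) < h(\AAA(0))$, and the chain $(A, \AAA(1), \ldots, \AAA(k))$ with first arrow $\AAA(0 \to 1) \circ f$ if $h(A) = h(\AAA(0))$; in the latter case $f$ is automatically an isomorphism by the defining property of the height function. Set $R(\AAA, f) = (A * \AAA, \id_A)$, and extend $R$ to morphisms by shifting the indexing map $\epsilon$ so that $0 \mapsto 0$ and relabelling the remaining indices according to the case. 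One checks $R \circ \iota = \id_{\E_A}$, where $\iota\co \E_A \hookrightarrow (A \darrow p)$ is the inclusion, since for $(\AAA, \id_A) \in \E_A$ the ``equal heights'' case of the construction gives $A * \AAA = \AAA$.

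The natural transformation $\iota \circ R \Rightarrow \id$ is defined on each object $(\AAA, f)$ by the morphism $(A * \AAA, \id_A) \to (\AAA, f)$ in $s(\A)$ coming from the strictly increasing inclusion of indices that skips the prepended $A$, paired with identity component isomorphisms; a short computation shows $p$ of this morphism equals $f$, so it is indeed a morphism in $(A \darrow p)$. Naturality being a direct verification, $\iota$ and $R$ become mutually inverse homotopy equivalences on nerves, yielding $|(A \darrow p)| \simeq |\E_A| \simeq *$. The main technical obstacle is the case analysis distinguishing $h(A) < h(\AAA(0))$ from $h(A) = h(\AAA(0))$ throughout: one must check that $R$ is functorial and that $\iota R \Rightarrow \id$ is natural for morphisms of $(A \darrow p)$ whose source and target fall into different cases of the prepending construction. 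This amounts to verifying that the ``first arrow'' $\AAA(0 \to 1) \circ f$ in the equal-height case is compatible with the indexing shifts, which reduces to the naturality of $\vp$ together with the identity $p(\epsilon_0,\vp_0) \circ f = f'$ that defines morphisms in $(A \darrow p)$.
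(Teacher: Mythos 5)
Your argument is correct, but it takes a genuinely different route from the paper. The paper's proof of this proposition is a one‑line citation: it invokes S\l omi\'nska's Proposition~1.5 to get that $p\co s(\A_{\sk})\to\A_{\sk}$ is right cofinal, and then observes that since $\A_{\sk}\subseteq\A$ and $s(\A_{\sk})\subseteq s(\A)$ are equivalences of categories (the latter because $s(\A_{\sk})$ is skeletal in $s(\A)$, see equation \eqref{skel sA}), the same holds for $p\co s(\A)\to\A$. You instead give a self-contained proof: you identify the full subcategory $\E_A$ of $(A\darrow p)$ on the objects $(\AAA,\id_A)$ with $\AAA(0)=A$, show it has a terminal object $(\langle A\rangle,\id_A)$, and then exhibit a retraction $R\co(A\darrow p)\to\E_A$ by prepending $A$, with a natural transformation $\iota R\Rightarrow\id$ induced by forgetting the prepended vertex. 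This is a concrete deformation retraction rather than a reduction to a cited theorem, and it avoids passing through $\A_{\sk}$ altogether. I have checked the case analysis in the key places: the mixed case $h(A)<h(\AAA(0))$, $h(A)=h(\AAA'(0))$ cannot occur for a morphism $(\AAA,f)\to(\AAA',f')$ in the comma category (a height comparison along $\vp(0)$ rules it out), and the remaining cases all use precisely the comma‑category identity $\vp(0)\circ\AAA(0\to\epsilon(0))\circ f=f'$ to verify that $R$ is well defined on morphisms and that $\iota R\Rightarrow\id$ is natural, exactly as you indicate. One small point worth making explicit when you write it up: the terminality of $(\langle A\rangle,\id_A)$ in $\E_A$ (and the computation $\vp(0)=\id_A$ for morphisms within $\E_A$) uses the EI/height hypothesis, since it is what forces $\epsilon(0)=0$. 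Your explicit argument has the advantage of being checkable from the definitions in this paper alone and of making visible exactly where the height function enters; the paper's approach is shorter but opaque without consulting S\l omi\'nska.
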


\begin{proof}
S{\l}omi\'nska \cite[Proposition 1.5]{Slominska} shows that the functor
$p\co s(\A_{\sk}) \to \A_{\sk}$
is right cofinal (\fullref{def right cofinal}) hence so is $p\co s(\A) \to \A$.
\end{proof}
\begin{defn}
\label{def barsA}
The category $\bar{s}(\A)$ has
the isomorphism classes $[\AAA]$ of the objects of $s(\A)$ as its object set. 
There is a unique morphism $[\AAA] \to [\AAA']$ if there
exists a morphism $\AAA \to \AAA'$ in $s(\A)$.
There is an obvious projection functor
$$
\pi\co  s(\A) \to \bar{s}(\A), \qquad \AAA \mapsto [\AAA].
$$
When $\D$ is a full subcategory of $s(\A)$ one obtains a sub-poset $\bar{\D}$ of $\bar{s}(\A)$
whose objects are the isomorphism classes of the objects of $\D$.
\end{defn}
Clearly $\bar{s}(\A)$ is a poset and it should be compared with S\l omi\'nska's construction
of $s_0(\A)$ in \cite[Section~1]{Slominska}.
Also note that $\bar{s}_\I(\A)=\bar{s}(\A)$ because $s_\I(\A)$ is skeletal in $s(\A)$.
Similarly $\bar{s}(\A_{\sk})=\bar{s}(\A)$.

\begin{lem}
\label{adj cof lem}
Let $J\co {\mathcal{C}} \to \D$ be a functor of small categories with a
left adjoint $L\co \D \to {\mathcal{C}}$ such that 
$L \circ J=\Id$.
Then $J$ is right cofinal.
\end{lem}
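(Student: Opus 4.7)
My plan is to show that $(D \darrow J)$ has contractible nerve for every $D \in \D$ by exhibiting an initial object; the nerve of a small category with an initial object is contractible, so this will establish right cofinality of $J$ in the sense of \fullref{def right cofinal}.

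The candidate initial object will be constructed from the unit $\eta\co \Id_\D \Rightarrow JL$ of the adjunction $L \dashv J$: for each $D \in \D$, the pair $(LD, \eta_D\co D \to JLD)$ is an object of $(D \darrow J)$. To verify it is initial I will take an arbitrary object $(C, f\co D \to JC)$ and use the natural bijection $\Hom_{\mathcal{C}}(LD, C) \cong \Hom_\D(D, JC)$ supplied by the adjunction. A morphism $(LD, \eta_D) \to (C, f)$ in $(D \darrow J)$ is by definition a morphism $x\co LD \to C$ in ${\mathcal{C}}$ satisfying $Jx \circ \eta_D = f$; but the left-hand side is precisely the image of $x$ under the adjunction bijection, so the equation fixes $x$ uniquely as the adjoint transpose of $f$.

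There is essentially no obstacle to carrying this out: the argument is a direct unpacking of the universal property of the unit of an adjunction, and no further categorical input is required. I note in passing that the hypothesis $L \circ J = \Id_{\mathcal{C}}$ plays no role in establishing right cofinality; it simply expresses that $J$ realises ${\mathcal{C}}$ as a reflective subcategory of $\D$, and appears to be recorded here because that is the form in which the lemma is applied elsewhere in the paper rather than because it is needed for the cofinality conclusion itself.
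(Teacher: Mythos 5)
Your proof is correct, and it takes a cleaner and more direct route than the paper's. You show directly that $(D\darrow J)$ has the initial object $(LD,\eta_D)$, using nothing but the universal property of the unit of the adjunction. The paper instead compares $(d\darrow J)$ with $(d\darrow\D)$: it builds functors $J_*\co(d\darrow J)\to(d\darrow\D)$ and $L_*\co(d\darrow\D)\to(d\darrow J)$, checks $L_*\circ J_*=\Id$, exhibits a natural transformation $\Id\Rightarrow J_*\circ L_*$ coming from the unit, and deduces $|(d\darrow J)|\simeq|(d\darrow\D)|\simeq *$. Both arguments are sound, but yours is shorter and — as you rightly observe — does not use $L\circ J=\Id$ at all: a left adjoint to $J$ already forces every $(D\darrow J)$ to have an initial object. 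The paper's proof, by contrast, does invoke $L\circ J=\Id$ in the step $L_*\circ J_*=\Id$ (which in fact also uses, at least implicitly, that the unit is the identity on objects in the image of $J$, as holds in the application in \fullref{commapi}). Your route therefore both simplifies the argument and sharpens the statement. One small caveat on your closing remark: $L\circ J=\Id$ by itself does not make $J$ a fully faithful embedding of a reflective subcategory — that would require the counit to be an isomorphism — but this aside does not affect the correctness of your proof.
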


\begin{proof}
Fix an object $d\in \D$.
We have to prove that the category $(d \darrow J)$ has a contractible nerve.
Let $(d \darrow \D)$ denote the category $(d \darrow 1_\D)$.
It clearly has a contractible nerve because it has an initial object.
The functors $J$ and $L$ induce obvious functors
\begin{eqnarray*}
&& J_* \co  (d\darrow J) \to (d\darrow \D), \qquad (c, d\xto{f} Jc) \mapsto (Jc, d \xto{f} Jc) \\
&& L_*\co (d\darrow\D) \to (d\darrow J), \qquad (d', d \xto{f} d') 
\mapsto (Ld', d \xto{f} d' \xto{\eta} JLd').
\end{eqnarray*}
It is obvious that $L_*\circ J_*=\Id$.
Furthermore the unit $\eta\co \Id \to LJ$ gives rise to a natural transformation 
$\Id \to J_*\circ L_*$.
Therefore $J_*$ induces a homotopy equivalence on nerves so 
$|(d\darrow J)| \simeq |d\darrow \D)|\simeq *$.
\end{proof}

\begin{prop}
\label{commapi}
For every functor $F\co s(\A) \to\cat$ and every $\AAA\in s(\A)$ there is a functor
$$
\Tr_{\B\Aut(\AAA)} F(\AAA) \to (\pi_* F)([\AAA])
$$
which induces a weak homotopy equivalence on nerves.
It is natural in the sense that every morphism $(\epsilon,\vp)\co \AAA \to \AAA'$ in $s(\A)$, 
gives rise to a square
$$
\xymatrix{
{\Tr_{\Aut(\AAA)} F(\AAA)}
\ar[r] \ar[d]_{\Tr_{\vp_*} F(\vp)} &
(\pi_*F)([\AAA]) \ar[d]^{\pi_*([\vp])}
\ar@{}|-{\overset{\tau}{\Leftarrow}}[dl]
\\
{\Tr_{\Aut(\AAA')} F(\AAA')} \ar[r] &
(\pi_*F)([\AAA']) 
}
$$
which commutes up to a natural transformation $\tau$ which is functorial in $(\epsilon,\vp)$.
Here $\vp_*\co \Aut(\AAA) \to \Aut(\AAA')$ is the homomorphism induced by restriction and
conjugation by $\vp\co \epsilon^*\AAA \approx \AAA'$ as we described in \eqref{sA auto maps}.
The square commutes on the nose if $F(\vp)\co F(\AAA) \longrightarrow F(\AAA')$ is the identity.
\end{prop}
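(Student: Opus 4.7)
The plan is to exhibit the desired functor as the one $J_!$ of \eqref{def F shriek} induced on translation categories by the natural inclusion $J\co \B\Aut(\AAA) \hookrightarrow (\pi\darrow[\AAA])$. Because $\bar{s}(\A)$ is a poset, $(\pi\darrow[\AAA])$ is canonically isomorphic to the full subcategory of $s(\A)$ on those objects $K$ admitting a morphism $K\to\AAA$ in $s(\A)$ (the accompanying datum $\pi K\to[\AAA]$ is then uniquely determined). Under this identification $J$ is the inclusion of the one-object subcategory on $\AAA$, and $J^*(F\circ\text{proj})$ is precisely $F(\AAA)$ viewed as a $\B\Aut(\AAA)$--diagram, so $J_!$ is a functor with the stated source and target.

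The weak-equivalence claim follows from the \fullref{cofinal thm} once I verify that $J$ is right cofinal in the sense of \fullref{def right cofinal}. For $K\in (\pi\darrow[\AAA])$, the comma category $(K\darrow J)$ has object set $\Hom_{s(\A)}(K,\AAA)$ and a morphism $f\to f'$ is a $g\in\Aut_{s(\A)}(\AAA)$ satisfying $g\circ f=f'$. The observation recorded just before \eqref{sA auto maps}, that $\Hom_{s(\A)}(K,\AAA)$ is acted on freely and transitively by $\Aut_{s(\A)}(\AAA)$, ensures that the $g$ relating any two $f,f'$ is unique, so $(K\darrow J)$ is a contractible groupoid. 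The cofinality theorem then gives
\[
\hhocolim{\B\Aut(\AAA)} |F(\AAA)| \xto{\ \simeq\ } \hhocolim{(\pi\darrow[\AAA])} |F\circ\text{proj}|.
\]
Combined with the Thomason equivalence $\eta$ of \eqref{Thomason map} on each side, together with the naturality of $\eta$ with respect to $J_!$ recorded in \eqref{def F shriek}, this identifies the displayed equivalence with the nerve of $J_!$, so $|J_!|$ is a weak homotopy equivalence.

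For the naturality square attached to $(\epsilon,\vp)\co\AAA\to\AAA'$, unwinding the definitions on an object $(\AAA,X)$ gives the two composites landing at $(\AAA,[\vp],X)$ and $(\AAA',\id,F(\vp)(X))$ in $(\pi_*F)([\AAA'])$, where $[\vp]$ denotes the unique arrow $[\AAA]\to[\AAA']$ in $\bar{s}(\A)$. I would take as the component of $\tau$ the morphism $\tau_{(\AAA,X)}=(\vp,\id_{F(\vp)(X)})$, which is a well-defined morphism in $(\pi_*F)([\AAA'])$ since $\pi(\vp)=[\vp]$. Naturality of $\tau$ against $(g,x)\co(\AAA,X)\to(\AAA,X')$ reduces, after composing in the translation category, to the equality $\vp\circ g=\vp_*(g)\circ\vp$ in $s(\A)$, which holds by the very definition of the conjugation homomorphism $\vp_*$ in \eqref{sA auto maps}. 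Functoriality of the assignment $(\epsilon,\vp)\mapsto\tau$ is a direct consequence of the composition formula in $s(\A)$, and strict commutativity of the square when $F(\vp)=\id$ is then checked by inspection.

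The main obstacle I anticipate is the bookkeeping required in this last step: three different functors---$\Tr_{\vp_*}F(\vp)$, the action of $\pi_*$ on morphisms of $\bar{s}(\A)$, and the two copies of $J_!$---must be unpacked in parallel to write down $\tau$ and verify its naturality both in $(g,x)$ and in $(\epsilon,\vp)$. By contrast, the cofinality step itself is almost immediate once the free-torsor fact is invoked.
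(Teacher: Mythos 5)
Your proof is correct, and the overall architecture (realize the functor as the inclusion-induced map on translation categories, apply the Cofinality Theorem, then build $\tau$ explicitly) matches the paper's. The one place you genuinely diverge is the cofinality step. The paper factors the inclusion $\B\Aut(\AAA)\hookrightarrow(\pi\darrow[\AAA])$ through the intermediate connected groupoid $\Pi_\AAA$, shows the second inclusion $\Pi_\AAA\hookrightarrow(\pi\darrow[\AAA])$ has a left adjoint $L\co(\BBB,[\BBB]\to[\AAA])\mapsto\epsilon^*\BBB$, and invokes \fullref{adj cof lem}; it then observes that $\B\Aut(\AAA)\hookrightarrow\Pi_\AAA$ is an equivalence of categories. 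You instead compute the comma categories $(K\darrow J)$ directly, observing that each one is the category of a free transitive $\Aut_{s(\A)}(\AAA)$--set and hence an indiscrete groupoid, so $J$ is right cofinal in one step. Both are valid; yours is more self-contained (it doesn't route through the adjoint-functor criterion), whereas the paper's sets up $\Pi_\AAA$ and the adjoint $L$, which it then reuses verbatim in the comparison with Dwyer's decomposition in \fullref{compare with Dwyer}. One small point of care in your write-up: the free-transitivity of $\Aut_{s(\A)}(\AAA)$ on $\Hom_{s(\A)}(K,\AAA)$ holds once that $\Hom$--set is nonempty, and you need to note that for $K\in(\pi\darrow[\AAA])$ this is guaranteed because $\bar{s}(\A)$ is a poset whose morphisms $[K]\to[\AAA]$ exist exactly when some $K\to\AAA$ exists in $s(\A)$ — you implicitly use this but don't flag it. Your construction of $\tau$ and the reduction of its naturality to $\vp\circ g=\vp_*(g)\circ\vp$ agree with the paper's treatment.
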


\begin{proof}
Fix an object $\AAA\co \underk \to \A$ in $s(\A)$.
Let $\Pi_\AAA$ be the full subcategory of $(\pi\darrow [\AAA])$
consisting of the objects $(\AAA',[\AAA']\xto{=} [\AAA])$.
It is isomorphic to the full subcategory of $s(\A)$ consisting of the
isomorphism class of $\AAA$.
The inclusion $J\co \Pi_\AAA \to (\pi\darrow [\AAA])$ has a left adjoint
$L$ where
$$
L \co  (\BBB,[\BBB] \to [\AAA]) \mapsto \epsilon^*\BBB, \qquad \text{
  where } [\epsilon^*\BBB]=[\AAA] \text{ for } \epsilon\co \underk' \hookrightarrow \underk.
$$
Clearly $\epsilon$ is unique if it exists.
There is a natural map $\BBB \to \epsilon^*\BBB$ induced
by the identity on $\epsilon^*\BBB$ under the bijection
$s(\A)(\BBB,\epsilon^*\BBB) \approx
s(\A)(\epsilon^*\BBB,\epsilon^*\BBB)$.
We obtain a natural transformation $\Id \to JL$ which gives rise to
bijections for every object $(\AAA',[\AAA']\to [\AAA])$ in $(\pi \darrow [\AAA])$ 
$$\Hom_{(\pi \darrow [\AAA])}(\BBB,J\AAA') = \Hom_{s(\A)}(\BBB,\AAA')
\approx \Hom_{s(\A)}(\epsilon^*\BBB,\AAA') =
\Hom_{\Pi_\AAA}(L\BBB,\AAA').$$
Thus $L$ is left adjoint to $J$ and we apply \fullref{adj cof lem}.
By definition $\Pi_\AAA$ is a connected groupoid with automorphism group 
$\B\Aut_{s(\A)}(\AAA)$.
Therefore upon realization, the functor
$$
\Tr\bigl( \B\Aut_{s(\A)}(\AAA){\to}s(\A) \xto{F} \Cat \bigr) \xto{\text{restriction}}
\Tr\bigl((\pi{\darrow}[\AAA]){\to}s(\A) \xto{F} \Cat\bigr) = (\pi_*F)([\AAA])
$$
induces a weak homotopy equivalence.
Also, for a morphism $\vp\co \AAA\to \AAA'$ we get an obvious 
$\vp_*\co  \Pi_\AAA \to \Pi_\AAA'$ by restriction and conjugation by the isomorphism
$\vp\co \epsilon^*\AAA \to \AAA'$.
It gives rise to the following diagram
$$
\xymatrix{
{\B\Aut_{s(\A)}(\AAA)} \ar[r]^{\incl} \ar[d]^{\vp_*} & 
\Pi_\AAA \ar[d]_{\vp_*} \ar[r]^{J} &
(\pi\darrow [\AAA]) \ar[d]^{[\vp]_*} 
\\
{\B\Aut_{s(\A)}(\AAA')} \ar[r]^{\incl} &
\Pi_{\AAA'}
\ar[r]^J &
(\pi\darrow [\AAA'])
}
$$ 
The morphism $\vp$ provides a canonical natural transformation 
$[\vp]_*\circ J \circ \incl \to J \circ \incl \circ \vp_*$.
This provides the natural transformation $\tau$ in the statement of the proposition and its 
naturality with $\vp$.
If $F(\vp)\co F(\AAA) \longrightarrow F(\AAA')$ is the identity, then $F(\tau)$ becomes the identity and
the square in the statement of the proposition commutes.
\end{proof}

%
%
\section{Proof of the main results}

Fix a $p$--local finite group $(S,\F,\L)$ and an $\F$--centric collection ${\mathcal{C}}$. 
Choose a subcategory $\I \subseteq \L^{\mathcal{C}}$ of distinguished
inclusions, cf \fullref{iota morphisms}.
Note that $\L^{\mathcal{C}}$ possesses a height function, see \fullref{sec EI}, by assigning to a 
subgroup $P$ in ${\mathcal{C}}$ its order.
Also every morphism in $\L^{\mathcal{C}}$ factors uniquely as an isomorphism followed by a morphism in 
$\I$.

We claim that (Definitions \ref{def bsdc} and \ref{def barsA})
$$
\bsd{\mathcal{C}} = \bar{s}(\L^{\mathcal{C}}).
$$
To see this recall that $s_\I(\L^{\mathcal{C}})$ is a skeletal subcategory of $s(\L^{\mathcal{C}})$, see 
\eqref{skel sA},  hence $\bar{s}(\L^{\mathcal{C}}) = \bar{s}_\I(\L^{\mathcal{C}})$.
The functor $\pi\co \L \to \F$ gives a functor $\bar{s}_\I(\L^{\mathcal{C}}) \to \bsd{\mathcal{C}}$ because
it maps the morphisms $\iota\in \I$ to inclusion of subgroups of $S$.
It is an isomorphism of categories because conjugation \eqref{def k-simplices} of two 
$k$--simplices $P_0<\cdots<P_k$ and $P'_0<\cdots< P'_k$ induced by an isomorphism
$\vp_k\in\Iso_\F(P_k,P_k')$ can be lifted to an isomorphism of the corresponding objects
in $s_\I(\L^{\mathcal{C}})$ by lifting the isomorphism $\vp_k\co P_k \to P_k'$ to $\L$ and using 
\fullref{restn in L} to obtain the commutative ladder in $\L^{\mathcal{C}}$:
$$
\xymatrix{
P_0 \ar[r]^{\iota} \ar[d]_\cong &
P_1 \ar[r]^\iota \ar[d]_\cong & 
\cdots \ar[r]^{\iota} &
P_k \ar[d]^{\tilde{\vp_k}}
\\
P_0' \ar[r]_\iota & 
P_1' \ar[r]^\iota &
\cdots \ar[r]^{\iota} &
P_k'
}
$$
We remark that a $k$--simplex $P_0<\cdots<P_k$ in ${\mathcal{C}}$ can be identified with the object
$P_0 \xto{\iota} \cdots \xto{\iota} P_k$ of $s(\L^{\mathcal{C}})$.
Under this identification we clearly have
$$
\Aut_\L(\PPP) = \Aut_{s(\L^{\mathcal{C}})}(\PPP).
$$ 
When $G$ is a discrete group we let 
$\B G$ denote the category with one object and $G$ as its set of morphisms.
For every $k$--simplex $\PPP$ in ${\mathcal{C}}$ 
we identify $\B\Aut_\L(\PPP)$ with the obvious subcategory of $\B\Aut_\L(P_0)$.

\begin{thm}
\label{thmC}
Let ${\mathcal{C}}$ be an $\F$--centric collection in a $p$--local finite group $(S,\F,\L)$.
Then there exists a functor
$\tilde{\de}_{\mathcal{C}}\co \bsd{\mathcal{C}} \to \Cat$
with the following properties
\begin{enumerate}
\item
There is a naturally defined functor
$\Tr_{\bsd{\mathcal{C}}}(\tilde{\de}_{\mathcal{C}}) \to \L^{\mathcal{C}}$
which induces a weak homotopy equivalence on nerves.
\label{thmC:target}

\item
For every $k$--simplex $\PPP$ there is a canonical functor
$
\B\Aut_\L(\PPP) \to \tilde{\de}_{\mathcal{C}}([\PPP])
$
which induces a weak homotopy equivalence on nerves.
If $\PPP'$ is a subsimplex of $\PPP$ then the following square commutes
$$
\xymatrix{
{\B\Aut_\L(\PPP)} \ar[r] \ar[d]_{\res^\PPP_{\PPP'}} &
\tilde{\de}_{\mathcal{C}}([\PPP]) \ar[d] \\
{\B\Aut_\L(\PPP')} \ar[r]  &
\tilde{\de}_{\mathcal{C}}([\PPP'])  
}
$$
\label{thmC:BPtype}

\item
The natural inclusion $\B\Aut_\L(\PPP) \subseteq \L^{\mathcal{C}}$ is equal to the composition
$$
\B\Aut_\L(\PPP) \to \tilde{\de}_{\mathcal{C}}([\PPP]) \subseteq
\Tr_{\bsd{\mathcal{C}}}(\tilde{\de}_{\mathcal{C}}) \to \L^{\mathcal{C}} 
$$
\label{thmC:augment}

\item
An isomorphism of $k$--simplices $\psi\co \PPP'\xto{\approx} \PPP$  in $s(\L^{\mathcal{C}})$ induces a
commutative square
$$
\xymatrix{
{\B\Aut_\L(\PPP)} \ar[r] \ar[d]_{c_\psi} &
\tilde{\de}_{\mathcal{C}}([\PPP]) \ar[d] \\
{\B\Aut_\L(\PPP')} \ar[r]  &
\tilde{\de}_{\mathcal{C}}([\PPP'])  
}
$$
\label{thmC:morphisms}
\end{enumerate}
\end{thm}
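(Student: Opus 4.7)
The plan is to specialize the machinery of Section 4 to $\A = \L^{\mathcal{C}}$. The order $P \mapsto |P|$ is a height function on $\L^{\mathcal{C}}$, and \fullref{unique factor in L} says that every morphism factors uniquely as an isomorphism followed by a distinguished inclusion in the poset $\I \subseteq \L^{\mathcal{C}}$, so the hypotheses of Section 4 are met. The identifications $\bsd{\mathcal{C}} = \bar{s}(\L^{\mathcal{C}})$ and $\Aut_{s(\L^{\mathcal{C}})}(\PPP) = \Aut_\L(\PPP)$ are already established in the text preceding the statement, which lets me transfer the discussion from $k$--simplices of ${\mathcal{C}}$ to objects of $s(\L^{\mathcal{C}})$.

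I would set $\tilde{\de}_{\mathcal{C}} = \pi_*(\star)$ in the sense of \fullref{def catF star}, where $\star\co s(\L^{\mathcal{C}}) \to \Cat$ is the constant functor at the terminal category and $\pi\co s(\L^{\mathcal{C}}) \to \bsd{\mathcal{C}}$ is the projection; concretely, $\tilde{\de}_{\mathcal{C}}([\PPP])$ is the comma category $(\pi\darrow [\PPP])$. For (\ref{thmC:target}) the natural functor is the composition
$$
\Tr_{\bsd{\mathcal{C}}}(\pi_*\star) \xto{\pi_\#} \Tr_{s(\L^{\mathcal{C}})}(\star) = s(\L^{\mathcal{C}}) \xto{p} \L^{\mathcal{C}},
$$
with $\pi_\#$ from \fullref{def F sharp} and $p$ from \fullref{p cofinal}. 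Segal's Pushdown Theorem together with the commutative diagram in \fullref{def F sharp} shows that $|\pi_\#|$ is a weak equivalence, and the Cofinality Theorem applied to the constant functor on $\L^{\mathcal{C}}$ does the same for $|p|$.

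For (\ref{thmC:BPtype}) I would read off the canonical functor from \fullref{commapi} with $F = \star$: since $\Tr_{\B\Aut_{s(\L^{\mathcal{C}})}(\PPP)}(\star) = \B\Aut_\L(\PPP)$, that proposition produces the desired weak equivalence $\B\Aut_\L(\PPP) \to \tilde{\de}_{\mathcal{C}}([\PPP])$. Compatibility with a subsimplex $\PPP'$ of $\PPP$ comes from the naturality square in \fullref{commapi}: the subsimplex inclusion supplies a morphism $\PPP \to \PPP'$ in $s_\I(\L^{\mathcal{C}})$, and \fullref{restn in L} identifies the induced $\vp_*$ of \eqref{sA auto maps} with the restriction $\res^\PPP_{\PPP'}$; because $F = \star$ is constant, the natural transformation $\tau$ produced by \fullref{commapi} is the identity and the square commutes on the nose. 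Item (\ref{thmC:morphisms}) is the same argument applied to an isomorphism of simplices (taken in the appropriate direction, so that $\vp_*$ becomes conjugation by $\psi$), and (\ref{thmC:augment}) is a direct trace through the definitions: after applying $\pi_\#$, the composite through $\tilde{\de}_{\mathcal{C}}([\PPP])$ and $\Tr_{\bsd{\mathcal{C}}}(\pi_*\star)$ lands back on the inclusion $\B\Aut_{s(\L^{\mathcal{C}})}(\PPP) \hookrightarrow s(\L^{\mathcal{C}})$, which composed with $p$ is the inclusion $\B\Aut_\L(\PPP) \hookrightarrow \L^{\mathcal{C}}$.

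The principal obstacle I anticipate is the on-the-nose commutativity demanded in (\ref{thmC:BPtype}), (\ref{thmC:augment}) and (\ref{thmC:morphisms}), not merely commutativity up to homotopy. This reduces to verifying that the homomorphism $\vp_*$ of \eqref{sA auto maps} is literally equal to $\res^\PPP_{\PPP'}$ when $\vp$ is the subsimplex map built from the $\iota$'s, which is ultimately an instance of the uniqueness statement in \fullref{restn in L} and the identity $\iota_Q^R \circ \iota_P^Q = \iota_P^R$ of \fullref{iota morphisms}. Once this identification is in place, the rest is bookkeeping around \fullref{commapi}, Segal's Pushdown Theorem and the Cofinality Theorem.
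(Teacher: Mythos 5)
Your proposal is essentially identical to the paper's own proof: define $\tilde{\de}_{\mathcal{C}} = \pi_*(\star)$ using \fullref{def catF star}, use \fullref{commapi} with the constant functor $\star$ to obtain items \eqref{thmC:BPtype} and \eqref{thmC:morphisms} with on-the-nose commutativity, and compose $\pi_\#$ from \fullref{def F sharp} with $p$ from \fullref{p cofinal} for item \eqref{thmC:target}, invoking the Pushdown and Cofinality Theorems. The only stylistic difference is that you spell out more explicitly why $\vp_*$ is literally the restriction map $\res^\PPP_{\PPP'}$, which the paper leaves to \fullref{restn in L} implicitly.
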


\begin{proof}
We have seen that $\bsd{\mathcal{C}} = \bar{s}(\L^{\mathcal{C}})$.
Let $\star\co \bar{s}(\L^{\mathcal{C}}) \to \cat$ denote the constant functor on the trivial
small category with one object and identity morphism.
Use the projection functor 
$\pi\co s(\L^{\mathcal{C}}) \to \bar{s}(\L^{\mathcal{C}})$ to define
$$
\tilde{\de}_{\mathcal{C}} = \pi_*(\star)
$$
According to \fullref{commapi} we have a canonical functor
$$
\B\Aut_\L(\PPP) = 
\Tr_{\B\Aut_\L(\PPP)}(\star) \to \pi_*(\star)([\PPP]) = \tilde{\de}_{\mathcal{C}}([\PPP])
$$
which induces a weak homotopy equivalence.
Since $\star$ is constant, the square in the statement of \fullref{commapi}
commutes and we obtain the naturality assertions in point \eqref{thmC:BPtype} and
\eqref{thmC:morphisms}.
The natural functor of \eqref{thmC:target} is defined using \fullref{def F sharp} and
\fullref{p cofinal} by
$$
\Tr_{\bsd{\mathcal{C}}} (\tilde{\de}_{\mathcal{C}}) =
\Tr_{\bar{s}(\L^{\mathcal{C}})} (\pi_*(\star)) 
\xto{ \ \pi_\# \ } 
\Tr_{s(\L^{\mathcal{C}})}(\star) = s(\L^{\mathcal{C}}) \xto{p} \L^{\mathcal{C}}.
$$
It induces a weak homotopy equivalence by \fullref{pushdown thm},
\fullref{def F sharp} and \fullref{cofinal thm}.
Whence point \eqref{thmC:target}.
Inspection of the functor $\pi_\#$, the inclusion
$\B\Aut_\L(\PPP) \subseteq (\pi\darrow [\PPP]) = \pi_*(\star)([\PPP])$ and
\fullref{translation cone} yield point \eqref{thmC:augment}
\end{proof}

\begin{proof}[Proof of \fullref{thmA}]
Apply  \fullref{thmC} above and define $\de_{\mathcal{C}} = |\tilde{\de}_{\mathcal{C}}|$.
\end{proof}

\begin{void}
\label{compare with Dwyer}
We now relate the construction in \fullref{thmA} to Dwyer's normaliser decomposition
\cite[Section~3]{Dwyer-decomp}.
We will show that the two functors are related by a zigzag of natural transformations which
induce a mod--$p$ equivalence. 

Fix a finite group $G$ and the $p$--local finite group $(S,\F,\L)$ associated with it.
A collection ${\mathcal{C}}$ of $\F$--centric subgroups of $S$ gives rise
to a $G$--collection
$\H$ of $p$--centric subgroups of $G$ (cf \cite[Section~1.19]{Dwyer-decomp}, 
\fullref{non exotic examples}) 
by taking all the $G$--conjugates of the elements of ${\mathcal{C}}$.
We let $\T^\H$ denote the transporter category of $\H$.
That is, the object set of $\T^\H$ is $\H$ and the morphism set $\T^\H(H,K)$ is the
set $N_G(H,K) = \{ g\in G : g^{-1}Hg\leq K\}$.
We also let $\T^{\mathcal{C}}$ denote the full subcategory of $\T^\H$ having ${\mathcal{C}}$ as its object set.
Almost by definition $\T^{\mathcal{C}}$ is skeletal in $\T^\H$.
We also obtain a zigzag of functors (see \fullref{non exotic examples})
$$
\T^\H \leftarrow \T^{\mathcal{C}} \to \L^{\mathcal{C}}.
$$
Dwyer \cite[Section~3]{Dwyer-decomp} defines a category $\bsd\H$ whose objects are the 
$G$--conjugacy classes $[\HHH]$ of the $k$--simplices $H_0<\cdots<H_k$ in $\H$.
There is a unique morphism $[\HHH] \to [\HHH']$ in $\bsd\H$ if and only if $\HHH'$ is
conjugate in $G$ to a subsimplex of $\HHH$.
It follows directly from the definition of $\H$ as the smallest $G$--collection containing
${\mathcal{C}}$ and from the definition of $\F=\F_S(G)$ that $\bsd\H=\bsd{\mathcal{C}}$.
We obtain a commutative diagram (see \fullref{def sA})
$$
\xymatrix{
s(\T^\H) \ar[d]_{\pi_2} &
s(\T^{\mathcal{C}}) \ar[r] \ar[d]_{\pi_1} \ar[l]_{\supseteq} &
s(\L^{\mathcal{C}}) \ar[d]^\pi 
\\
{\bar{s}}(\T^\H) &
\bar{s}(\T^{\mathcal{C}}) \ar@{=}[l] \ar@{=}[r] &
\bar{s}(\L^{\mathcal{C}}).
}
$$
Fix a $k$--simplex $\PPP=P_0<\cdots<P_k$ in ${\mathcal{C}}$.
Note that $(\pi_2\darrow [\PPP])$ 
is isomorphic to the subcategory of $\T^\H$ of the objects $\PPP'$ which admit a morphism
to $\PPP$.
It contains a full subcategory $\Pi_\PPP$ of the objects
of $s(\T^\H)$ that are isomorphic to $\PPP$; cf the proof of \fullref{commapi}.
By inspection $\Pi_\PPP$ is the translation category of the action of $G$ on the orbit of
$\PPP$, that is it is the transported category of the $G$--set $[\PPP]$ in $\H$ thought of 
as a functor $\B\Aut_\L(\PPP)\to \sets$, cf \cite[Section~3.3]{Dwyer-decomp}.
Thus
$$
\de^{\text{Dwyer}}_\H([\PPP]) = EG \times_G [\PPP] = \Nr(\Pi_\PPP).
$$
The inclusion $J\co \Pi_\PPP\hookrightarrow (\pi_2 \darrow [\PPP])$ has a left adjoint 
$L\co (\PPP', [\PPP'] \to [\PPP])  \mapsto \epsilon^*\PPP'$ 
where $(\epsilon,\vp)\co \PPP' \to \PPP$ is a morphism in $s(\L^{{\mathcal{C}}})$, see \fullref{def sA}.
Compare the proof of \fullref{commapi}.
\fullref{adj cof lem} implies that $J$ is right cofinal.
We obtain a zigzag of functors
$$
\de^{\text{Dwyer}}_\H \xrightarrow[\simeq]{\phantom{-----}} 
|(\pi_2)_*(\star)|
\xleftarrow[\simeq]{\ \incl \ } |(\pi_1)_*(\star)| \xto{\text{ mod-}p \ }
|\pi_*(\star)| = |\tilde{\de}_{\mathcal{C}}| = \de_{\mathcal{C}}.
$$
The third map induces a mod--$p$ equivalence by the following argument.
For any object $[\PPP]$ we obtain a map
$(\pi_1)_*(\star)([\PPP]) \to \pi_*(\star)([\PPP])$
which by \fullref{commapi} is equivalent to the map 
$$
B\Aut_G(\PPP) \to B\Aut_\L(\PPP).
$$
Since $P_0$ is $p$--centric then $C_G(P_0)=Z(P_0)\times C_G'(P_0)$ where $C_G'(P_0)$ is
a characteristic $p'$--subgroup of $C_G(P_0)$ and $\Aut_\L(P_0) = N_G(P_0)/C_G'(P_0)$.
Therefore $\Aut_G(\PPP) \to \Aut_G(\PPP)/C_G'(P_0)$ induces a mod--$p$ equivalence as needed.
\end{void}


We shall now prove \fullref{thmB}.
Fix an $\F$--centric collection ${\mathcal{C}}$ and a collection $\E$ of elementary abelian
subgroups in $(S,\F,\L)$.
Recall from \fullref{def barcl} and \fullref{def NoL}
the definitions of $\bar{C}_\L({\mathcal{C}};E_k)$ and $\No_\L({\mathcal{C}};\EEE)$ 
where $\EEE$ is a $k$--simplex in $\E$.

\begin{prop}
\label{NoTrCbar}
Fix a $k$--simplex $\EEE$ in $\E$, namely a functor $\EEE\co \underk \to \F^\E$.
There is a functor 
$$
\epsilon\co  \No_\L({\mathcal{C}};\EEE) \to 
    \Tr_{\B\Aut_\F(\EEE)} \Big(\bar{C}_\L({\mathcal{C}};\EEE) \Big)
$$
which is fully faithful and natural with respect to inclusion of simplices.
If $E_k$ is fully $\F$--centralised, its image is also skeletal and
in particular induces homotopy equivalence
$
|\No_\L({\mathcal{C}};\EEE)| \xto{ \ \ \simeq \ \ } 
    |\bar{C}_\L({\mathcal{C}};E_k)|_{h\Aut_\F(\EEE)}.
$
\end{prop}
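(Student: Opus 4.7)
The plan is to define the functor $\epsilon$ explicitly, verify full faithfulness by a direct unpacking of the Grothendieck construction, and then invoke axiom II of the saturated fusion system to show essential surjectivity of the image when $E_k$ is fully $\F$--centralised.

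On objects I would set $\epsilon(P)=(P,\iota_P)$, where $\iota_P\co E_k\hookrightarrow Z(P)$ is the inclusion; this is well-defined because $E_k\leq Z(P)$ for every $P\in\No_\L({\mathcal{C}};\EEE)$.  For a morphism $\vp\in\No_\L({\mathcal{C}};\EEE)(P,Q)$ the restriction $h:=\pi(\vp)|_{E_k}$ belongs to $\Aut_\F(\EEE)$ by definition of $\No_\L({\mathcal{C}};\EEE)$.  Adopting the convention that $h\in\Aut_\F(\EEE)$ acts on $\bar C_\L({\mathcal{C}};E_k)$ by $(P,f)\mapsto(P,f\circ h^{-1})$, I would set $\epsilon(\vp)=(h,\vp)$: this is a legitimate morphism $(P,\iota_P)\to(Q,\iota_Q)$ in the Grothendieck construction because the required compatibility $\iota_Q=\pi(\vp)\circ\iota_P\circ h^{-1}$ is just the equality $x=\pi(\vp)(h^{-1}(x))$ on $E_k$.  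Functoriality falls out of $\pi$ being a functor.  Full faithfulness is immediate: an arbitrary morphism $(h,\psi)\co(P,\iota_P)\to(Q,\iota_Q)$ in the Grothendieck construction satisfies the same compatibility, which forces $h=\pi(\psi)|_{E_k}$ and simultaneously places $\psi$ in $\No_\L({\mathcal{C}};\EEE)(P,Q)$.

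For essential surjectivity when $E_k$ is fully $\F$--centralised, let $(P,f)$ be an arbitrary object of $\bar C_\L({\mathcal{C}};E_k)$.  Because $f(E_k)\leq Z(P)$, conjugation $c_g$ by any $g\in P$ is trivial on $f(E_k)$, so $P\leq N_{f^{-1}}$ in the sense of \fullref{def sat fus}.  Axiom II then extends $f^{-1}\co f(E_k)\to E_k$ to a morphism $\tilde f\co N_{f^{-1}}\to S$ in $\F$, and I set $Q=\tilde f(P)$.  The restriction $\tilde f|_P\co P\xto{\cong}Q$ is an $\F$--isomorphism carrying $f(E_k)$ onto $E_k$, so $E_k\leq Z(Q)$; conjugation-closure of ${\mathcal{C}}$ gives $Q\in{\mathcal{C}}$, hence $Q\in\No_\L({\mathcal{C}};\EEE)$.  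Any lift $\psi\in\Iso_\L(P,Q)$ of $\tilde f|_P$ satisfies $\pi(\psi)\circ f=\iota_Q$ on $E_k$, so it realises an isomorphism $(P,f)\xto{\cong}(Q,\iota_Q)=\epsilon(Q)$ in $\bar C_\L({\mathcal{C}};E_k)$, and hence in the Grothendieck construction paired with the identity of $\Aut_\F(\EEE)$.  Injectivity of $\epsilon$ on isomorphism classes follows from full faithfulness (any iso $\epsilon(P)\cong\epsilon(Q)$ restricts to one in $\No_\L({\mathcal{C}};\EEE)$), so the image is a skeleton of the target and the induced map on nerves is a homotopy equivalence.

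Naturality with respect to a subsimplex inclusion $\EEE'\subseteq\EEE$ with top $E_{k'}\leq E_k$ is formal: the restriction $\pi(\vp)|_{E_{k'}}$ is the image of $\pi(\vp)|_{E_k}$ under $\Aut_\F(\EEE)\to\Aut_\F(\EEE')$, and $\iota_P$ on $E_k$ restricts to $\iota_P$ on $E_{k'}$, so $\epsilon$ intertwines the inclusion functors on both sides.  The main technical obstacle is the essential-surjectivity step: one has to notice that centrality of $f(E_k)$ in $P$ is precisely what forces $P\leq N_{f^{-1}}$, and one must lift the extension produced by axiom II to an isomorphism in $\L$ compatible with the structural inclusions.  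Everything else reduces to routine unwinding of the definitions of the Grothendieck construction and of $\No_\L({\mathcal{C}};\EEE)$.
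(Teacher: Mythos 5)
Your proof is correct and follows essentially the same route as the paper's: the explicit definition of $\epsilon$, the observation that the Grothendieck compatibility condition forces $h=\pi(\psi)|_{E_k}$ (giving full faithfulness), and the use of axiom II of saturation applied to $f^{-1}\co f(E_k)\to E_k$ (noting $P\leq C_S(f(E_k))\leq N_{f^{-1}}$) to produce an isomorphism in $\H$ onto an object of the form $\epsilon(Q)$. The only cosmetic difference is your choice of convention for the $\Aut_\F(\EEE)$--action ($f\mapsto f\circ h^{-1}$ versus the paper's $f\mapsto f\circ g$), which just relabels the group component of each morphism in the translation category.
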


\begin{proof}
The objects of $\H:=\B\Aut_\F(\EEE)\int \bar{C}_\L({\mathcal{C}};\EEE)$ are pairs
$(P,f)$ where $P\in {\mathcal{C}}$ and $f \in \F(E,Z(P))$.
Morphisms are pairs $(\vp,g)$ where $\vp\in \L(P,P')$ and
$g\in\Aut_\F(\EEE)$ such that $f'=\pi(\vp)\circ f \circ g$ (see \fullref{sec homotopy colimits}).
Since $f,f'$ are monomorphisms, $g$ is determined by $\vp$. 
Define $\epsilon\co \No_\L({\mathcal{C}};\EEE) \to \H$ by
\begin{eqnarray}
 &&\epsilon(P) = (P,E \xto{\incl} Z(P) \leq P) \\
\nonumber
 &&\epsilon( P\xto{\vp} P') = (P\xto{\vp} P', \pi(\vp)|_{E_k}^{-1}).
\end{eqnarray}
It is well defined  and fully faithful by the definition of $\No_\L({\mathcal{C}};\EEE)$.
Naturality with respect to inclusion of simplices is readily verified.
Consider an object $(P,f) \in \H$.
Note that $f(E_k)\leq Z(P)$, hence for $g:=f^{-1}\in \Iso_\F(f(E_k),E_k)$ we must have
(see \fullref{def sat fus}) $N_g \supseteq C_S(f(E_k)) \supseteq P$.
By axiom II of \fullref{def sat fus} we can extend $g$ to an isomorphism
$h\co P \to P'$
in $\F$.
Clearly $P'$ is in ${\mathcal{C}}$ because the latter is a collection.
Fix a lift $\tilde{h}\in\L(P,P')$ for $h$.
We have 
$\pi(\tilde{h})\circ f = h \circ \smash{\incl^P_{f(E_k)}} \circ g^{-1} =
\smash{\incl_{E_k}^{P'}}$.
Therefore $(\id_{E_k},\tilde{h})$ is an isomorphism $(P,f) \cong
\bigl(P',\smash{\incl_{E_k}^{P'}}\bigr)$
in $\H$.
This shows that $\epsilon$ embeds $\No_\L({\mathcal{C}};\EEE)$ into a skeletal subcategory of $\H$ and
the result follows.
\end{proof}

\begin{proof}[Proof of \fullref{thmB}]
For every $P\in{\mathcal{C}}$ define $\zeta(P)=\Omega_pZ(P)$, see \fullref{def omega_p}.
Note that if $f\co P\to P'$ is an isomorphism in $\F$ then $f^{-1}\co \zeta(P') \to \zeta(P)$
is an isomorphism in $\F$.
Also if $P \leq P'$ in ${\mathcal{C}}$ then $Z(P) \leq Z(P')$ because $P$
and $P'$ are $\F$--centric so
their centres are equal to their $S$--centralisers.
It easily follows that this assignment forms a functor
$$
\zeta\co  \L^{\mathcal{C}} \to {\F^\E}^\op.
$$ 
Fix $E\in\E$.
Since every homomorphism $f\co E \to Z(P)$ factors through $\zeta(P)$ we see that
$\bar{C}_\L({\mathcal{C}};E) =(\zeta \darrow E)$.
In particular  \eqref{def catF star} 
\begin{equation}
\label{kan zeta}
\zeta_*(\star) \co  E \mapsto \bar{C}_\L({\mathcal{C}};E).
\end{equation}
We now observe that $\F^\E$ is an EI-category.
The assignment $E \mapsto |E|$ gives rise to a height function in the sense of
\fullref{sec EI}.
Furthermore the set $\I$ of inclusions in $\F^\E$ forms a poset where every morphism
in $\F^\E$ factors uniquely as an isomorphism followed by an element in $\I$.
We conclude that
$$
\bar{s}\E = \bar{s}_\I(\F^\E) \approx \bar{s}(\F^\E).
$$
There is an isomorphism $\tau\co s({\F^\E}^\op) \to s(\F^\E)$ which is the identity on objects.
On the morphism set between $\EEE\co \underk\to\F^\E$ and $\EEE'\co \undern\to\F^\E$ such that
$\epsilon^*\EEE\approx \EEE'$ for some injective $\epsilon\co \undern \to \underk$ (see 
\fullref{def sA}), $\tau$ has the effect
\begin{multline*}
\Hom_{s({\F^\E}^\op)}(\EEE,\EEE') =
\Iso_{{\F^\E}^\op}(\epsilon^*\EEE,\EEE') = 
\\
\Iso_{\F^\E}(\EEE',\epsilon^*\EEE)
\xrightarrow[\approx]{\ \ \ f \mapsto f^{-1} \ \  } 
\Iso_{\F^\E}(\epsilon^*\EEE,\EEE') = \Hom_{s(\F^\E)}(\EEE,\EEE').
\end{multline*}
The functor $p\co s({\F^\E}^\op) \to {\F^\E}^\op$ of \fullref{p cofinal} fits into a commutative
diagram
$$
\xymatrix{
s({\F^\E}^\op) 
\ar[r]^p \ar[d]_\tau^\approx &
{\F^\E}^\op 
\ar@{=}[d] 
\\
s(\F^\E) 
\ar[r]^\mu &
{\F^\E}^\op
}
$$
where $\mu(\EEE)=E_k$.
Since $\tau$ is an isomorphism, $\mu$ is right cofinal.
We obtain a zigzag of functors
$$
\bsd\E \xleftarrow{\ \ \pi \ \ } 
s(\F^\E) \xto{\ \ \mu \ \ }
{\F^\E}^\op
\xleftarrow{ \ \ \zeta \ \ }
\L^{\mathcal{C}} 
\xto{\ \ \star \ \ }
\Cat.
$$
Define 
$$
\tilde{\de}_\E= \pi_*\circ \mu^* \circ \zeta_*(\star), \qquad
\text{ and } \qquad
\de_\E=|\tilde{\de}_\E|.
$$
Since $\mu$ is right cofinal, the \fullref{cofinal thm} and 
\fullref{pushdown thm} imply a weak homotopy equivalence
$$
\hhocolim{\bsd\E} \, \de_\E \xto{\simeq} |\L^{\mathcal{C}}|.
$$
This is point \eqref{thmB:target} of the theorem.
Inspection of \eqref{translation cone}, \fullref{def F sharp} and \eqref{def F shriek} 
shows that this equivalence is given as the realization of a functor 
$\Tr_{\bsd\E}\tilde{\de}_\E \to \L^{\mathcal{C}}$ where
\begin{eqnarray*}
 (\EEE, E_k \xto{f} P) &\mapsto& P \\
 (\EEE \to \EEE', g\in\Aut_\F(\EEE'), P \xto{\vp\in\L} P') &\mapsto& \vp.
\end{eqnarray*}
Point \eqref{thmB:terms} follows from \eqref{kan zeta} and \fullref{commapi}.
Point \eqref{thmB:augment} follows from \fullref{No equiv orbit}, inspection
of $\epsilon$ in \fullref{NoTrCbar}, of \eqref{kan zeta} and the 
functor $\Tr_{\bsd\E}(\tilde{\de}_\E) \to \L^{\mathcal{C}}$.
Point \eqref{thmB:maps} is a consequence of \fullref{commapi}.
%
%
%
\end{proof}

\bibliographystyle{gtart}
\bibliography{link}

\end{document}